\documentclass [smallcondensed]{svjour3}

\newtheorem{thm}{Theorem}[section]

\newtheorem{lem}[thm]{Lemma}

\newtheorem{defn}[thm]{Definition}

\usepackage{amsmath}
\usepackage{amssymb}

\usepackage{subfig}
\usepackage{siunitx}

\newcommand{\deltaLK}{\delta_{\ell,k}}

\ifx\pdfoutput\undefined
\usepackage{graphicx}
\else
\usepackage[pdftex]{graphicx}
\usepackage{epstopdf}
\fi

\begin{document}

\title{Wendland functions with increasing smoothness converge to a Gaussian \thanks{This work was supported by the Australian Research Council. The authors thank Holger Wendland, Robert Schaback and Simon Hubbert for helpful discussions. }}
\author{A. Chernih \and I. H. Sloan \and R. S. Womersley}

\institute{A. Chernih
\at School of Mathematics and Statistics, University of New South Wales, Sydney NSW 2052,
Australia\\Tel.: +61-410-697411, Fax: +612 93857123 \\\email{andrew@andrewch.com}
\and
I. H. Sloan
\at School of Mathematics and Statistics, University of New South Wales\\\email{i.sloan@unsw.edu.au}
\and
R. S. Womersley
\at School of Mathematics and Statistics, University of New South Wales\\\email{r.womersley@unsw.edu.au}
}

\maketitle
\begin{center}
\today
\end{center}

\begin{abstract}
The Wendland functions are a class of compactly supported radial basis functions with a user-specified smoothness parameter. We prove that with an appropriate rescaling of the variables, both the original and the ``missing" Wendland functions converge uniformly to a Gaussian as the smoothness parameter approaches infinity. We also explore the convergence numerically with Wendland functions of different smoothness.
\end{abstract}

\keywords{Radial basis functions \and compact support \and smoothness \and Wendland functions \and Gaussian.}
\subclass{33C90 \and 41A05 \and 41A15 \and 41A30 \and 41A63 \and 65D07 \and 65D10}



\section{Introduction}

Radial basis functions (RBFs) are a popular tool for approximating scattered data and solving partial differential equations. Recent books covering practical and theoretical issues are Fasshauer \cite{Fas07} and Wendland \cite{Wen05}. A function $\Phi : \mathbb{R}^d \rightarrow \mathbb{R}$ is said to be \textit{radial} if there exists a function $\phi: [0,\infty) \rightarrow \mathbb{R}$ such that $\Phi(\mathbf{x}) = \phi(\|\mathbf{x}\|_2)$ for all $\mathbf{x} \in \mathbb{R}^d$. Then we can define an RBF for a given centre $\mathbf{x}_i \in \mathbb{R}^d$ as
\begin{equation*}
\Phi_i(\mathbf{x}) = \phi(\|\mathbf{x}-\mathbf{x}_i\|_2).
\end{equation*}

An interpolant $\mathcal{I}$ for the scattered data interpolation problem, where we are given data $(\mathbf{x}_j,y_j), j=1,\ldots,n$, with $\mathbf{x}_j \in \mathbb{R}^d$ and $y_j \in \mathbb{R}$, can be constructed as a linear combination
\begin{equation}
\mathcal{I}(\mathbf{x}) = \sum_{i=1}^n c_i \Phi_i(\mathbf{x}), \quad \mathbf{x} \in \mathbb{R}^d, \label{eqnInterpolation1}
\end{equation}
where the coefficients $c_1,\ldots,c_n$ are chosen so that
\begin{equation}
\mathcal{I}(\mathbf{x}_j) = y_j, \quad j=1,\ldots,n. \label{eqnInterpolation2}
\end{equation}
If $\Phi$ is positive definite, then \eqref{eqnInterpolation1} and \eqref{eqnInterpolation2} have a unique solution. We recall that a continuous function $f:\mathbb{R}^d \rightarrow \mathbb{R}$ is \textit{positive definite} (some would say \textit{strictly positive definite}) if for any $n$ distinct points $\mathbf{x}_1,\ldots,\mathbf{x}_n \in \mathbb{R}^d$, the quadratic form
\begin{equation*}
\sum_{i=1}^n \sum_{j=1}^n \alpha_i \alpha_j \, f(\mathbf{x}_i - \mathbf{x}_j)
\end{equation*}
is positive for all $\boldsymbol\alpha = [\alpha_1,\ldots,\alpha_n]^T \in \mathbb{R}^n \backslash \{\mathbf{0}\}$.

RBFs can be categorised as either \textit{globally supported} or \textit{compactly supported}. The first category includes Gaussians and multiquadrics \cite{Fas07}. Both have a scale parameter (also known as a shape or tension parameter), the selection of which is still a major ongoing research topic \cite{Rip99,FasZ07}.
 \newline

The second category includes Wendland functions \cite{Wen95}, Buhmann RBFs \cite{Buh03}, Wu's RBFs \cite{Wu95} and ``Euclid's hat" \cite{Gne99,Sch95}. In this paper we consider the Wendland functions, which are piecewise polynomial compactly supported functions. They have the minimum polynomial degree for any level of smoothness, and are positive definite since they have a strictly positive Fourier transform. The Wendland functions were originally derived for integer-order Sobolev spaces in odd dimensions in Wendland \cite{Wen95} and were then extended to even dimensions in Schaback \cite{Sch09} (Schaback called these the ``missing" Wendland functions). They are uniquely defined for a given spatial dimension $d$ and a smoothness parameter $k$ (up to a constant multiplier). All the Wendland functions are equal to zero outside [0,1].
\newline

Our purpose in this paper is to consider the limit of the original and the missing Wendland functions as the smoothness parameter $k$ goes to infinity. In Figure \ref{FigOrigWFsd3k1to5}, we can see the original Wendland functions in $\mathbb{R}^3$ for $k=1,\ldots,4$, where we have normalised the functions to have value 1 at the origin. The peak narrows as $k$ increases, demonstrating the need for a change of variable when considering the limit.

In Section \ref{SectionBackground} we define the original and misising Wendland functions, and give some needed background on Fourier transforms. Then in Section \ref{sectionNormEqualAreaWFs} we define the \textit{normalised equal area Wendland functions} $\psi_{\ell,k}$, obtained by a linear change of scale in the argument of the normalised functions that ensures equal areas under their graphs for all values of $k$. In Section \ref{SectionLimits} we consider the limit as the smoothness parameter goes to infinity. Our main theorem is Theorem \ref{thmMain}, which states that the normalised equal area Wendland functions converge uniformly to a Gaussian on the real half-line. Section \ref{SectionQualityApprox} gives numerical illustrations of the results.  A reasonable  conclusion from the experiments might be that there is little incentive to use high values for the smoothness parameter.

The similarity of appropriately scaled Wendland functions to a Gaussian has been mentioned in \cite{Mor08} and \cite{ForRS00}, in both cases for $\mathbb{R}^3$ with $k=1$.  No theoretical explanations were given in those papers.
\begin{figure}[!ht]
\centerline{
    \includegraphics[width=0.9\linewidth]{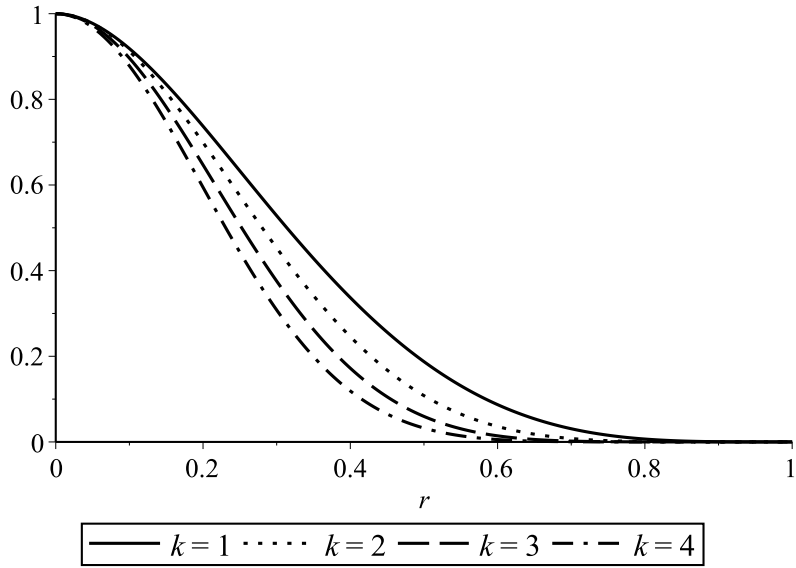}
        }
    \caption{The (original) Wendland functions $\phi_{\ell,k}(r)$ for $\ell = k + 2$ and $k = 1,\ldots,4$, normalised to have value 1 at $r=0$.}
    \label{FigOrigWFsd3k1to5}
\end{figure}

\section{Background} \label{SectionBackground} \hfill
\newline
In this section, we provide background material on the Wendland radial basis functions and Fourier transforms.
\subsection{Wendland functions} \hfill
\newline
Wendland functions were originally introduced in \cite{Wen95} and then more cases were added by Schaback in \cite{Sch09}. We shall refer to the functions from \cite{Wen95} as the \textit{original Wendland functions} and those  from \cite{Sch09} as the \textit{missing Wendland functions}. 
We firstly define the Wendland functions and then discuss their properties.

Throughout the paper $k>0$ denotes the smoothness parameter of the Wendland functions, with $k$ an integer for the original Wendland functions and $k$ a half integer for the missing Wendland functions.  In limits $\lim_{k\to\infty}$ it is to be understood that $k$ goes to infinity through both integer and half integer values.  Also,  throughout the symbol $\ell$ will stand for
\begin{equation}\label{ldef}
\ell=\ell(k):=\left\lfloor \frac{d}{2}+k \right\rfloor+1
\end{equation}
where $d$ is the spatial dimension, and the floor function $\lfloor x \rfloor$ gives the largest integer less than or equal to $x$.

\begin{defn} With $d$ the spatial dimension, $2k$ a positive integer and $\ell$  given by \eqref{ldef}, let
\begin{equation}
\phi_{\ell,k}(r) := \left\{
 \begin{array}{ll}
 \displaystyle{\frac{1}{\Gamma(k) \, 2^{k-1}} \int_r^1 s \,(1-s)^{\ell} (s^2-r^2)^{k-1} \mathrm{d}s} & \displaystyle{\mathrm{for }\hspace{0.1in} 0 \leq r \leq 1}, \\ \\[-2ex]
0 & \displaystyle{\mathrm{for }\hspace{0.1in} r > 1.}
\end{array} \right.
 \label{formulaWFsIntDefn}
\end{equation}
This defines the original Wendland functions when $k$ is a positive integer and the missing Wendland functions when $k$ is a positive half-integer.
\end{defn}
Both the original Wendland functions and the missing Wendland functions are continuous on $[0,\infty)$. Note that the choice of $\ell$ given by \eqref{ldef} is the smallest integer that ensures that the resulting functions are positive definite. For fixed $d$ we have
\begin{equation}
\ell \sim k \ \mathrm{as} \ k \rightarrow \infty,  \label{LsimK}
\end{equation}
where $x \sim y$ denotes asymptotic equality, that is $\frac{x}{y}\rightarrow 1$.

We give explicit formulae for the original Wendland functions for $d=2,3$ and $k=1,\ldots,4$ (so $\ell=k+2$) in Table \ref{Wfpolys} where $\stackrel{\cdot}{=}$ denotes equality up to a positive constant factor. The support of all the original Wendland functions is $[0,1]$.
\begin{table}[!htbp]
\setlength{\extrarowheight}{5pt}
\begin{centering}
\begin{tabular}{|c|l|}
\hline
$k$ & Original Wendland function \\
\hline
1 & $\phi_{3,1}(r) \stackrel{\cdot}{=} (1-r)^4_+ (4r+1)$ \\
2 & $\phi_{4,2}(r) \stackrel{\cdot}{=} (1-r)^6_+ (35r^2 + 18r+3)$ \\
3 & $\phi_{5,3}(r) \stackrel{\cdot}{=} (1-r)^8_+ (32r^3+25r^2+8r+1)$ \\
4 & $\phi_{6,4}(r) \stackrel{\cdot}{=} (1-r)^{10}_+ (429r^4 + 450r^3 + 210r^2 + 50r + 5)$ \\

\hline
\end{tabular} \caption{The original Wendland functions $\phi_{\ell,k}(r)$ for $d=2,3$ and $k=1,\ldots,4$. } \label{Wfpolys}
\end{centering}
\end{table}

Schaback \cite{Sch09} extended Wendland's original approach to introduce the missing Wendland functions.  An important diffference between the original Wendland functions and the missing Wendland functions is that the missing Wendland functions, whilst still being supported on $[0,1]$, now have logarithmic and square-root multipliers of polynomial components. We give explicit formulae for the missing Wendland functions for $d=2$ and $k=\frac{1}{2},\frac{3}{2}$ and $\frac{5}{2}$ in Table \ref{missWfpolys}, where \begin{equation*}
L(r) := \log \left(\frac{r}{1+\sqrt{1-r^2}} \right) \quad \mbox{and} \quad S(r) := \sqrt{1-r^2}, \quad r \in (0,1].
\end{equation*}
\begin{table}[!htbp]
\setlength{\extrarowheight}{5pt}
\begin{centering}
\begin{tabular}{|c|l|}
\hline
$k$ & Missing Wendland function \\
\hline
$\frac{1}{2}$ & $\phi_{2,\frac{1}{2}}(r) \stackrel{\cdot}{=} 3r^2L(r) + (2r^2+1)S(r)$ \\
$\frac{3}{2}$ & $\phi_{3,\frac{3}{2}}(r) \stackrel{\cdot}{=} -15r^4(6+r^2)L(r) - (81r^4+28r^2-4)S(r)$ \\
$\frac{5}{2}$ & $\phi_{4,\frac{5}{2}}(r) \stackrel{\cdot}{=} (945r^8+2520r^6)L(r)+(256r^8+2639r^6+690r^4-136r^2+16)S(r)$ \\
\hline
\end{tabular} \caption{The missing Wendland functions $\phi_{\ell,k}(r)$ for $d=2$ and $k=\frac{1}{2},\frac{3}{2},\frac{5}{2}$. } \label{missWfpolys}
\end{centering}
\end{table}

Hereafter by \textit{Wendland functions} we will mean both the original and missing Wendland functions. Both will be denoted by $\phi_{\ell,k}(r)$.

\subsection{Fourier transforms} \hfill
The subsequent proofs rely heavily on Fourier transforms. This section provides definitions and outlines key properties. For further information, see \cite{SteW71,Wen05}.

The Fourier transform of $f \in L_1(\mathbb{R}^d)$ is defined as
\begin{equation*}
\widehat{f}(\mathbf{\mathbf{z}}) := (2\pi)^{-\frac{d}{2}} \int_{\mathbb{R}^d} f(\mathbf{x}) \, e^{-i\mathbf{\mathbf{z}} \cdot \mathbf{x}} \, \mathrm{d} \mathbf{x}, \hspace{0.1in} \mathbf{z} \in \mathbb{R}^d.
\end{equation*}
For the case of a radial function $\Phi \in L_1(\mathbb{R}^d), \Phi(\mathbf{x})=\phi(\|\mathbf{x}\|)$, with $\|\cdot\|$ denoting the Euclidean norm in $\mathbb{R}^d$, the Fourier transform is also radial, and is given by $\widehat{\Phi}(\mathbf{z}) = \mathcal{F}_d \phi(\|\mathbf{z}\|)$ where
\begin{equation}
\mathcal{F}_d \phi(z) := z^{1-\frac{d}{2}} \int_0^{\infty} \phi(y) \, y^{\frac{d}{2}} \, J_{\frac{d}{2}-1}(z\,y) \, \mathrm{d} y, \label{eqnRadialFourierTrans}
\end{equation}
and $J_{\nu}(y)$ denotes the Bessel function of the first kind of order $\nu$. In particular, from the properties
\begin{equation}\label{Bessel}
|J_\nu(x)| \leq \frac{|x|^\nu}{2^\nu\Gamma(\nu+1)}, \, x \in \mathbb{R},\quad |x|^{-\nu} J_\nu(x)\to  \frac{1}{2^\nu\Gamma(\nu+1)}\mbox{ as }x\to 0,
\end{equation}
for $\nu \geq -\frac{1}{2}$, it follows easily that
\begin{equation}
\mathcal{F}_d \phi(0) := \frac{1}{2^{\frac{d}{2}-1}\Gamma\left(\frac{d}{2}\right)} \int_0^{\infty} \phi(y) \, y^{d-1} \, \mathrm{d} y. \label{eqnRadialFourierTransZero}
\end{equation}

From the Fourier inversion theorem applied to radial functions, we know that if $\Phi \in L_1(\mathbb{R}^d)$ with $\Phi(\mathbf{x}) = \phi(\|\mathbf{x}\|), \phi:[0,\infty) \rightarrow \mathbb{R}$, and if $\widehat{\Phi} \in L_1(\mathbb{R}^d)$, then
\begin{equation}
\phi(y) = y^{1-\frac{d}{2}} \int_0^{\infty} \mathcal{F}_d\phi(z) \, z^{\frac{d}{2}} \, J_{\frac{d}{2}-1}(yz) \, \mathrm{d} z. \label{eqnHankelInversion}
\end{equation}

We also recall that if $f \in L_1(\mathbb{R}^d)$ is continuous at zero and positive definite then its Fourier transform is in $L_1(\mathbb{R}^d)$ and is non-negative \cite{SteW71}.

The Gaussian radial basis function with scale parameter $\alpha>0$, which we denote by $G_{\alpha}(y)$, is
\begin{equation}\label{G}
G_{\alpha}(y) := e^{-\alpha \, y^2}, \quad y \in \mathbb{R},
\end{equation}
and its Fourier transform is given by
\begin{equation}\label{FTG}
\widehat{G}_{\alpha}(z) = \frac{1}{(2\alpha)^\frac{d}{2}} \, e^{-\frac{z^2}{4 \alpha}}, \quad z \in \mathbb{R}.
\end{equation}

We first define the generalised hypergeometric function and then state the Fourier transform of the Wendland functions. Further details on generalised hypergeometric functions can be found in \cite{Abr72} and \cite{And00}.
\newline
\begin{defn}
The generalised hypergeometric function ${}_pF_q(a_1,\ldots,a_p;b_1,\ldots,b_q;x)$ is
\begin{equation*}
{}_pF_q(a_1,\ldots,a_p;b_1,\ldots,b_q;x) := \sum_{n=0}^{\infty} \frac{(a_1)_n \cdots (a_p)_n}{(b_1)_n \cdots (b_q)_n }\frac{x^n}{n!},
\end{equation*}
where none of $b_1,\ldots,b_q$ is a negative integer or zero and where
\begin{equation}
(c)_n := c(c+1)\cdots(c+n-1) = \frac{\Gamma(c+n)}{\Gamma(c)}, \hspace{0.1in} n \geq 1
\end{equation}
denotes the Pochhammer symbol, with $(c)_0 = 1$.
\end{defn}

When $p \leq q$ the series converges for all finite $x$ and defines an entire function. When $p=q+1$ the series converges absolutely for $|x|<1$, and also at $x=1$ if
$$\sum_{i=1}^q b_i - \sum_{i=1}^p a_i > 0.$$

\begin{lem}
\label{thmFTwf}
The $d$-dimensional Fourier transform of
$\phi_{\ell,k}$ is given by
\begin{equation*}
\mathcal{F}_d \phi_{\ell,k}(z) =C_{d}^{\ell,k}
{}_1F_2\left(\frac{d+1}{2}+k; \frac{\ell+d+1}{2}+k,
\frac{\ell+d+2}{2}+k; -\frac{z^2}{4} \right), \quad z \geq 0,
\end{equation*}
where
\[
C_{d}^{\ell,k}:=\frac{2^{k+\frac{d}{2}}\Gamma(\ell+1)\Gamma\left(\frac{d+1}{2}+k\right)}{\sqrt{\pi}\Gamma(\ell+d+2k+1)}.
\]
\end{lem}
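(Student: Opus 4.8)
\emph{Proof idea.} The plan is to compute the radial transform \eqref{eqnRadialFourierTrans} directly from the integral representation \eqref{formulaWFsIntDefn}, collapsing the double integral to a single Bessel integral over $[0,1]$ which is then expanded term by term. The argument is uniform in $2k\in\mathbb{N}$, so it handles the original and the missing Wendland functions simultaneously.

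First I would substitute \eqref{formulaWFsIntDefn} into \eqref{eqnRadialFourierTrans} and interchange the order of the two integrations over the compact triangle $\{0\le y\le s\le1\}$; this is legitimate by Fubini, since the integrand is absolutely integrable there (the factor $(s^2-y^2)^{k-1}$ is integrable for $k>0$, and $J_{\frac{d}{2}-1}(zy)$ is bounded by \eqref{Bessel} whenever $d\ge1$). The inner integral $\int_0^s y^{\frac{d}{2}}J_{\frac{d}{2}-1}(zy)(s^2-y^2)^{k-1}\ud y$ is exactly Sonine's first finite integral: the substitution $y=s\sin\theta$ turns it, up to a power of $s$, into $\int_0^{\pi/2}J_{\frac{d}{2}-1}(zs\sin\theta)\sin^{\frac{d}{2}}\theta\,\cos^{2k-1}\theta\,\ud\theta$, which evaluates (for $k>0$) to a multiple of $J_{\frac{d}{2}+k-1}(zs)$; the factor $2^{k-1}\Gamma(k)$ produced cancels the normalising constant in \eqref{formulaWFsIntDefn}, leaving
\begin{equation*}
\mathcal{F}_d\phi_{\ell,k}(z)=z^{1-\frac{d}{2}-k}\int_0^1(1-s)^\ell s^{\frac{d}{2}+k}J_{\frac{d}{2}+k-1}(zs)\,\ud s.
\end{equation*}

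Next I would insert the power series $J_{\frac{d}{2}+k-1}(zs)=\sum_{m\ge0}\frac{(-1)^m}{m!\,\Gamma(\frac{d}{2}+k+m)}\bigl(\frac{zs}{2}\bigr)^{\frac{d}{2}+k-1+2m}$ and integrate term by term (justified by uniform convergence on $[0,1]$), so that each term produces a Beta integral $\int_0^1(1-s)^\ell s^{d+2k-1+2m}\ud s=\Gamma(\ell+1)\Gamma(d+2k+2m)/\Gamma(\ell+d+2k+2m+1)$. The powers of $z$ combine to $(-z^2/4)^m$ up to an $m$-independent power of $2$, so $\mathcal{F}_d\phi_{\ell,k}(z)$ emerges as a power series in $-z^2/4$ whose coefficients are built from $\Gamma(\ell+1)$, $\Gamma(d+2k+2m)$, $\Gamma(\frac{d}{2}+k+m)$ and $\Gamma(\ell+d+2k+2m+1)$.

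The remaining work, and the only delicate point, is converting these gamma ratios into Pochhammer symbols with the parameters appearing in the statement. I would apply the Legendre duplication formula $\Gamma(2w)=\pi^{-1/2}2^{2w-1}\Gamma(w)\Gamma(w+\tfrac12)$ twice: once with $2w=d+2k+2m$, which cancels $\Gamma(\frac{d}{2}+k+m)$ and produces $\Gamma(\frac{d+1}{2}+k+m)$; once with $2w=\ell+d+2k+2m+1$, which produces $\Gamma(\frac{\ell+d+1}{2}+k+m)\Gamma(\frac{\ell+d+2}{2}+k+m)$. Writing each surviving $\Gamma(\,\cdot\,+m)$ as $\Gamma(\,\cdot\,)$ times the corresponding Pochhammer symbol then exhibits the series as a constant multiple of ${}_1F_2(\frac{d+1}{2}+k;\frac{\ell+d+1}{2}+k,\frac{\ell+d+2}{2}+k;-z^2/4)$; a final use of the duplication formula in the form $\Gamma(w)\Gamma(w+\tfrac12)=\sqrt\pi\,2^{1-2w}\Gamma(2w)$ on the two lower parameters, together with bookkeeping of the powers of $2$, identifies this constant with $C_d^{\ell,k}$. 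Since $p=1\le2=q$ the ${}_1F_2$ series is entire, so the identity holds for all $z\ge0$. I expect the gamma-function and powers-of-$2$ bookkeeping in this last step to be the only real source of friction; the analytic steps (Fubini, termwise integration) are routine because $\phi_{\ell,k}$ is continuous with support in $[0,1]$.
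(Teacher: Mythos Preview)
Your argument is correct. The paper itself does not prove this lemma: it simply cites \cite[Theorem~3]{Zas06}. Your route---Fubini on the triangle $\{0\le y\le s\le 1\}$, Sonine's first finite integral to collapse the inner $y$-integral to a single Bessel function of order $\tfrac{d}{2}+k-1$, then termwise integration against the Bessel series producing Beta integrals, followed by two applications of the Legendre duplication formula---is a standard and self-contained derivation, and the bookkeeping you outline does indeed deliver $C_d^{\ell,k}$ (I checked the gamma and power-of-$2$ accounting). The only cosmetic point is that the intermediate expression $z^{1-\frac{d}{2}-k}\int_0^1(1-s)^\ell s^{\frac{d}{2}+k}J_{\frac{d}{2}+k-1}(zs)\,\ud s$ carries a negative power of $z$, so strictly speaking it is defined for $z>0$; the value at $z=0$ is recovered either by continuity or, more naturally, by observing that after inserting the Bessel series the singular prefactor cancels termwise and the resulting power series is manifestly entire. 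Compared with merely citing the literature, your computation has the advantage of being explicit and of making transparent why the ${}_1F_2$ parameters take the particular form used later in the proof of Theorem~\ref{thmConvFourier}.
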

\begin{proof}
The proof can be found in \cite[Theorem 3]{Zas06}.
\end{proof}

\section{Normalised equal area Wendland functions} \label{sectionNormEqualAreaWFs} \hfill
\newline
Hubbert \cite{Hub10} expresses the Wendland functions in terms of Legendre functions. Equation (3.4) in \cite{Hub10} states that for $r \in (0,1]$
\begin{equation}
\phi_{\ell,k}(r) = \frac{ \Gamma(\ell+1)}{2^{\ell+k} \Gamma(\ell+k+1)}\,(1-r^2)^{\ell+k} r^{-\ell} {}_2F_1\left(\frac{\ell}{2},k + \frac{\ell+1}{2}; \ell+k+1;1-\frac{1}{r^2}\right). \label{OrigEqn}
\end{equation}
Now we apply the identity \cite[15.3.4]{Abr72}
\begin{equation}
{}_2F_1(a,b;c;z) = (1-z)^{-a} {}_2F_1\left(a,c-b;c;\frac{z}{z-1}\right) \label{F3}
\end{equation}
to (\ref{OrigEqn}), which gives us, for $r \in [0,1]$,
\begin{equation}
\phi_{\ell,k}(r) = \frac{\Gamma(\ell+1)}{2^{\ell+k} \Gamma(\ell+k+1)} (1-r^2)^{\ell+k} {}_2F_1 \left(\frac{\ell}{2},\frac{\ell+1}{2};\ell+k+1;1-r^2 \right), \label{EqnStep1}
\end{equation}
where we recover the case of $r=0$ by right continuity.
We will need to normalise the Wendland functions, and thus need the value of $\phi_{\ell,k}(0)$.

\begin{lem}\label{Phi_zero}
\begin{equation}
\phi_{\ell,k}(0) = \frac{\Gamma(\ell+1)\Gamma(2k)}{2^{k-1}\Gamma(k)\Gamma(\ell+2k+1)}. \label{phiZero}
\end{equation}
\end{lem}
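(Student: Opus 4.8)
The plan is to evaluate $\phi_{\ell,k}(0)$ directly from the integral definition \eqref{formulaWFsIntDefn}, since at $r=0$ the integral collapses to a Beta-function-type integral. Setting $r=0$ in \eqref{formulaWFsIntDefn} gives
\begin{equation*}
\phi_{\ell,k}(0) = \frac{1}{\Gamma(k)\,2^{k-1}} \int_0^1 s\,(1-s)^\ell\,(s^2)^{k-1}\,\mathrm{d}s
= \frac{1}{\Gamma(k)\,2^{k-1}} \int_0^1 s^{2k-1}\,(1-s)^\ell\,\mathrm{d}s.
\end{equation*}
The remaining integral is the Euler Beta integral $B(2k,\ell+1) = \Gamma(2k)\Gamma(\ell+1)/\Gamma(2k+\ell+1)$, valid since $2k>0$ and $\ell+1>0$. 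Substituting this value yields exactly \eqref{phiZero}, and the proof is complete in a few lines.

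An alternative route, which I would mention as a cross-check, is to start from the hypergeometric representation \eqref{EqnStep1}: at $r=0$ we need ${}_2F_1(\ell/2,(\ell+1)/2;\ell+k+1;1)$, whose value is given by the Gauss summation formula ${}_2F_1(a,b;c;1) = \Gamma(c)\Gamma(c-a-b)/(\Gamma(c-a)\Gamma(c-b))$ whenever $\operatorname{Re}(c-a-b)>0$. Here $c-a-b = \ell+k+1 - \ell/2 - (\ell+1)/2 = k + 1/2 > 0$, so the formula applies, and after multiplying by the prefactor $\Gamma(\ell+1)/(2^{\ell+k}\Gamma(\ell+k+1))$ and simplifying the Gamma functions (using the duplication formula to handle the half-integer arguments) one recovers the same closed form. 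This second approach is more laborious but confirms consistency between \eqref{formulaWFsIntDefn} and \eqref{EqnStep1}.

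There is essentially no serious obstacle here: the only mild care needed is to confirm the Beta integral converges at the endpoints, which it does for all admissible $k>0$ (including half-integer $k$) and all $\ell \geq 1$ as given by \eqref{ldef}. I would present the first (direct) computation as the proof and leave the hypergeometric verification as a remark.
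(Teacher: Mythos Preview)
Your proposal is correct. Your primary argument --- setting $r=0$ directly in the integral definition \eqref{formulaWFsIntDefn} and recognising the resulting integral as the Beta function $B(2k,\ell+1)$ --- is genuinely different from, and noticeably more elementary than, the paper's proof. The paper instead works from the hypergeometric representation \eqref{EqnStep1}, evaluates ${}_2F_1(\ell/2,(\ell+1)/2;\ell+k+1;1)$ via the Gauss summation formula, and then applies the duplication formula \eqref{DoubleGamma} twice to reduce the resulting Gamma-function expression to the stated form; in other words, the paper's proof is precisely your ``cross-check'' route. Your direct computation avoids the hypergeometric machinery and the duplication-formula simplifications entirely, and the only point requiring care --- convergence of $\int_0^1 s^{2k-1}(1-s)^\ell\,\mathrm{d}s$ at $s=0$ for half-integer $k\ge 1/2$ --- you have already flagged. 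The trade-off is that the paper's approach stays within the hypergeometric framework of \eqref{EqnStep1} that is used elsewhere in Section~\ref{sectionNormEqualAreaWFs}, whereas yours gives a self-contained one-line derivation straight from the defining integral.
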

\begin{proof}
To calculate $\phi_{\ell,k}(0)$ we need the value of the hypergeometric function in \eqref{EqnStep1} at the argument 1 (since in $\phi_{\ell,k}(r)$ the hypergeometric function has argument $1-r^2$). From \cite[15.1.20]{Abr72} we have the identity
\begin{equation}
{}_2F_1(a,b;c;1) = \frac{\Gamma(c)\Gamma(c-b-a)}{\Gamma(c-b)\Gamma(c-a)} , \hspace{0.1in} c-b-a > 0. \label{FvalOne}
\end{equation} Applying \eqref{FvalOne} to \eqref{EqnStep1} shows that
$$ \phi_{\ell,k}(0) = \frac{\Gamma(\ell+1)\,\Gamma(k+\frac{1}{2})}{2^{\ell+k}\,\Gamma(\frac{\ell}{2}+k+\frac{1}{2})\,\Gamma(\frac{\ell}{2}+k+1)}.$$
Using the duplication formula for the gamma function \cite[6.1.18]{Abr72}
\begin{equation}
\Gamma(z)\Gamma \left(z+\frac{1}{2}\right) = 2^{1-2z} \sqrt{\pi}\, \Gamma(2z), \label{DoubleGamma}
\end{equation} twice -- firstly for $\Gamma(\frac{\ell}{2}+k+\frac{1}{2})\Gamma(\frac{\ell}{2}+k+1)$ and then for $\Gamma(k)\Gamma\left(k+\frac{1}{2}\right)$ -- and with several terms cancelling out in the numerator and denominator, we get the desired result. \hspace*{\fill} \qed
\end{proof}

We will also need the following result for the area under the Wendland functions.
\begin{lem}\label{areaThm}
\begin{equation}
\int_0^\infty\phi_{\ell,k}(r)\mathrm{d} r = \frac{2^{k} \, \Gamma(\ell+1) \, \Gamma(k+1)}{\Gamma(\ell+2k+2)}. \label{area}
\end{equation}
\end{lem}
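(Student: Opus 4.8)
The plan is to bypass the hypergeometric and Legendre representations entirely and compute the area straight from the integral definition \eqref{formulaWFsIntDefn}, since that turns the left‑hand side of \eqref{area} into a double integral over the triangle $\{(r,s):0\le r\le s\le 1\}$ with a \emph{non‑negative} integrand. First I would insert \eqref{formulaWFsIntDefn}, use the compact support (so the outer $r$‑integral runs only over $[0,1]$), and interchange the order of integration; this is licensed by Tonelli's theorem because $s(1-s)^\ell(s^2-r^2)^{k-1}\ge 0$ on the triangle and is integrable there (the singularity of $(s^2-r^2)^{k-1}$ along $r=s$ is integrable for every $k>0$, in particular for the half‑integer value $k=\tfrac12$). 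The outcome is
\[
\int_0^\infty\phi_{\ell,k}(r)\,\mathrm{d}r=\frac{1}{\Gamma(k)\,2^{k-1}}\int_0^1 s\,(1-s)^\ell\left(\int_0^s (s^2-r^2)^{k-1}\,\mathrm{d}r\right)\mathrm{d}s .
\]

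Next I would evaluate the two resulting one‑dimensional integrals as Beta integrals. Under the rescaling $r=st$ the inner one becomes $s^{2k-1}\int_0^1(1-t^2)^{k-1}\,\mathrm{d}t=s^{2k-1}\cdot\tfrac12 B\!\left(\tfrac12,k\right)=s^{2k-1}\,\dfrac{\sqrt{\pi}\,\Gamma(k)}{2\,\Gamma\!\left(k+\frac12\right)}$; feeding this back, the remaining $s$‑integral is $\int_0^1 s^{2k}(1-s)^\ell\,\mathrm{d}s=B(2k+1,\ell+1)=\dfrac{\Gamma(2k+1)\Gamma(\ell+1)}{\Gamma(\ell+2k+2)}$, valid for any $\ell>-1$. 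Collecting constants gives $\displaystyle\int_0^\infty\phi_{\ell,k}(r)\,\mathrm{d}r=\frac{\sqrt{\pi}\,\Gamma(2k+1)\Gamma(\ell+1)}{2^{k}\,\Gamma\!\left(k+\frac12\right)\Gamma(\ell+2k+2)}$, and the final step is to tidy this using the duplication formula \eqref{DoubleGamma} in the form $\Gamma\!\left(k+\frac12\right)=2^{1-2k}\sqrt{\pi}\,\Gamma(2k)/\Gamma(k)$, together with $\Gamma(2k+1)=2k\,\Gamma(2k)$ and $k\,\Gamma(k)=\Gamma(k+1)$: the $\sqrt{\pi}$ and $\Gamma(2k)$ factors cancel and what remains is exactly $\dfrac{2^{k}\Gamma(\ell+1)\Gamma(k+1)}{\Gamma(\ell+2k+2)}$, i.e.\ \eqref{area}.

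I do not anticipate a genuine obstacle: each step is a textbook Beta/Gamma manipulation, and the single point worth a sentence is the interchange of integrals for the half‑integer values of $k$, which Tonelli covers once one notes the corner singularity is integrable. As a consistency check one can instead evaluate $\mathcal{F}_1\phi_{\ell,k}(0)$ in two ways: taking $d=1$ in Lemma \ref{thmFTwf} gives $\mathcal{F}_1\phi_{\ell,k}(0)=C_1^{\ell,k}$ since ${}_1F_2(\cdots;0)=1$, while taking $d=1$ in \eqref{eqnRadialFourierTransZero} gives $\mathcal{F}_1\phi_{\ell,k}(0)=\sqrt{2/\pi}\int_0^\infty\phi_{\ell,k}(r)\,\mathrm{d}r$; equating the two reproduces \eqref{area}.
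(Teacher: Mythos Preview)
Your proof is correct. The Tonelli interchange, the two Beta evaluations, and the duplication-formula clean-up are all sound, and the half-integer case $k=\tfrac12$ causes no trouble since $(s^2-r^2)^{-1/2}$ has an integrable $(s-r)^{-1/2}$ singularity along the diagonal.

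Your primary route, however, differs from the paper's. The paper argues in one line: take $d=1$ in \eqref{eqnRadialFourierTransZero} and in Lemma~\ref{thmFTwf}, use ${}_1F_2(\cdots;0)=1$, and read off $\int_0^\infty\phi_{\ell,k}(r)\,\mathrm{d}r=\sqrt{\pi/2}\,C_1^{\ell,k}$, which is exactly \eqref{area}. This is precisely what you relegate to a ``consistency check'' at the end. The trade-off is clear: the paper's argument is shorter but leans on Lemma~\ref{thmFTwf}, whose proof is outsourced to \cite{Zas06}; your Beta-integral computation is longer but entirely self-contained, using nothing beyond the defining integral \eqref{formulaWFsIntDefn} and standard Gamma identities. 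Either is fine here, and it is worth noting that your direct calculation in effect \emph{verifies} the $z=0$ value of Lemma~\ref{thmFTwf} rather than assuming it.
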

\begin{proof}
This follows from \eqref{eqnRadialFourierTransZero} and Lemma \ref{thmFTwf} on setting $d=1$ (noting that $\phi_{\ell,k}$ has no explicit $d$ dependence). \hspace*{\fill} \qed
\end{proof}

Now we can define the \textit{normalised equal area Wendland functions} $\psi_{\ell,k}$. These are Wendland functions normalised to have the value $1$ at $0$, and with a change of scale in the argument so that the normalised equal area Wendland functions have integrals over the real half-line equal to the integral of $\exp(-\alpha y^2)$, where $\alpha>0$ can be chosen for the convenience of the user.
\begin{thm}
\label{PsiTheorem}
With $\alpha$ an arbitrary positive real number,
the normalised equal area Wendland functions are given by
\begin{equation}
\psi_{\ell,k}(y) = \frac{2^{k-1}\Gamma(k)\Gamma(\ell+2k+1)}{\Gamma(\ell+1)\Gamma(2k)}\left\{
\begin{array}{ll}
\phi_{\ell,k}\left(\frac{y}{\deltaLK(\alpha)}\right) & \mathrm{ for }\hspace{0.1in} 0 \leq y \leq \deltaLK(\alpha), \\
0 & \mathrm{ for }\hspace{0.1in} y > \deltaLK(\alpha)
\label{psiFormula}
\end{array} \right.,
\end{equation}
where
\begin{equation}
\deltaLK(\alpha) := \frac{(\ell + 2k+1) \, \Gamma\left(k+\frac{1}{2}\right)}{2 \, \sqrt{\alpha} \, \Gamma(k+1)}. \label{DeltaA}
\end{equation}
\end{thm}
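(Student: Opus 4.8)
The plan is to treat the two defining requirements of $\psi_{\ell,k}$ — that it take the value $1$ at the origin, and that its integral over $[0,\infty)$ equal that of $G_\alpha$ — as two conditions that pin down the multiplicative constant and the scaling parameter, and then to check that the closed forms in \eqref{psiFormula} and \eqref{DeltaA} satisfy them. Write $\psi_{\ell,k}(y) = A\,\phi_{\ell,k}(y/\delta)$ on $[0,\delta]$ and $0$ beyond, with $A>0$ and $\delta>0$ to be determined.

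First I would impose $\psi_{\ell,k}(0)=1$. Since $\psi_{\ell,k}(0)=A\,\phi_{\ell,k}(0)$, Lemma~\ref{Phi_zero} forces $A = 1/\phi_{\ell,k}(0) = 2^{k-1}\Gamma(k)\Gamma(\ell+2k+1)\big/\big(\Gamma(\ell+1)\Gamma(2k)\big)$, which is exactly the prefactor appearing in \eqref{psiFormula}. Next I would impose the equal-area condition. Because $\phi_{\ell,k}$ is supported on $[0,1]$, the substitution $r=y/\delta$ gives $\int_0^\infty \psi_{\ell,k}(y)\,\mathrm{d}y = A\delta\int_0^\infty \phi_{\ell,k}(r)\,\mathrm{d}r$, which by Lemma~\ref{areaThm} equals $A\delta\,2^k\Gamma(\ell+1)\Gamma(k+1)\big/\Gamma(\ell+2k+2)$. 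Setting this equal to $\int_0^\infty e^{-\alpha y^2}\,\mathrm{d}y = \tfrac12\sqrt{\pi/\alpha}$ and solving for $\delta$, then substituting the value of $A$ just found and using $\Gamma(\ell+2k+2)=(\ell+2k+1)\Gamma(\ell+2k+1)$, reduces $\delta$ to $\tfrac{\sqrt\pi}{2\sqrt\alpha}\cdot\tfrac{(\ell+2k+1)\Gamma(2k)}{2^{2k-1}\Gamma(k)\Gamma(k+1)}$.

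Finally, applying the duplication formula \eqref{DoubleGamma} with $z=k$, in the form $\Gamma(2k) = 2^{2k-1}\Gamma(k)\Gamma(k+\tfrac12)\big/\sqrt\pi$, cancels the powers of $2$ and the factor $\sqrt\pi$, leaving $\delta = (\ell+2k+1)\Gamma(k+\tfrac12)\big/\big(2\sqrt\alpha\,\Gamma(k+1)\big) = \deltaLK(\alpha)$, as claimed. The computation is entirely elementary; the only point requiring any care is the bookkeeping of gamma-function factors in the last two steps, where the duplication formula provides the decisive simplification.
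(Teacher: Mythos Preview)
Your proof is correct and follows essentially the same approach as the paper: you invoke Lemmas~\ref{Phi_zero} and~\ref{areaThm} together with the duplication formula \eqref{DoubleGamma} to verify that the normalisation constant and the scale $\deltaLK(\alpha)$ are exactly what is required. The paper's own argument is terser, simply asserting that those three ingredients suffice, whereas you spell out the algebra that leads to the closed form of $\deltaLK(\alpha)$; but the content is the same.
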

\begin{proof}
It follows from Lemmas \ref{areaThm} and \ref{Phi_zero} and the duplication formula \eqref{DoubleGamma} that the integral of $\psi_{\ell,k}$ equals $\sqrt{\pi}/(2\sqrt{\alpha})$, which is the integral of $\exp{(-\alpha y^2)}$ over the half line, and that $\psi_{\ell,k}(0)=1$. \hspace*{\fill}\qed
\end{proof}

In Figure \ref{FigPsid3k1to5} we plot the normalised equal area Wendland functions $\psi_{\ell,k}$ for $d=2,3$,$k=1,\ldots,4$ with $\alpha=1$.
\begin{figure}[!ht]
\centerline{
    \includegraphics[width=0.9\linewidth]{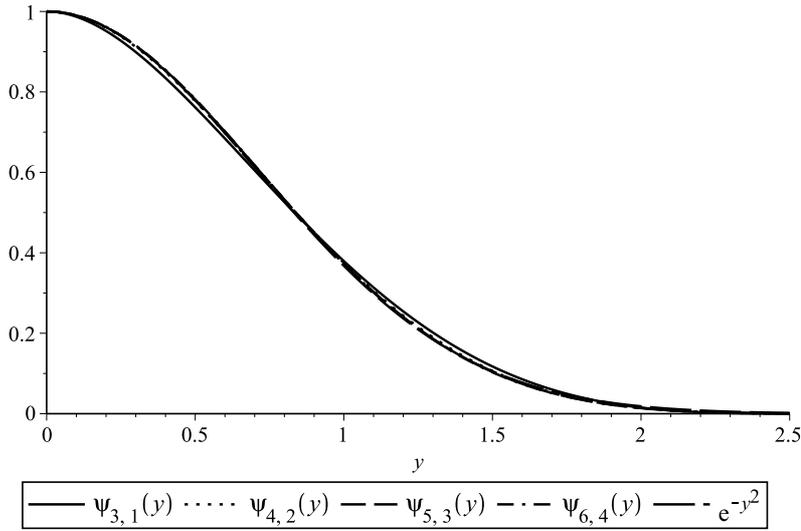}
    }
    \caption{The normalised equal area Wendland functions $\psi_{3,1}(y)$, $\psi_{4,2}(y)$, $\psi_{5,3}(y)$, $\psi_{6,4}(y)$ with $\alpha=1$ and $\exp(-y^2).$ }
    \label{FigPsid3k1to5}
\end{figure}
\newline

We also need the results in the next two lemmas.

\begin{lem}
\label{lemmaUpperBoundDelta}
For $k \geq \min \left( \frac{d}{2},1 \right)$,
\begin{equation}
\delta_{\ell,k}(\alpha) \leq \frac{3 \sqrt{k}}{\sqrt{\alpha}}. \label{eqnUppBoundDelta}
\end{equation}
\end{lem}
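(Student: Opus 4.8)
The plan is to start from the closed form \eqref{DeltaA} for $\delta_{\ell,k}(\alpha)$ and reduce the claim to an elementary inequality in $k$ (and $d$). Since $\sqrt{\alpha}\,\delta_{\ell,k}(\alpha) = \dfrac{(\ell+2k+1)\,\Gamma(k+\tfrac12)}{2\,\Gamma(k+1)}$, there are exactly two $k$-dependent pieces to control: the integer $\ell+2k+1$ and the Gamma ratio $\Gamma(k+\tfrac12)/\Gamma(k+1)$.

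For the first, the definition \eqref{ldef} gives $\ell = \lfloor \tfrac{d}{2}+k\rfloor + 1 \le \tfrac{d}{2}+k+1$, so $\ell + 2k + 1 \le \tfrac{d}{2}+3k+2$; in the small-dimensional borderline cases one instead keeps the exact value of the floor (noting, e.g., that for $d=1$ the argument $\tfrac{d}{2}+k$ is an integer whenever $k$ is a half-integer) to save a little. For the second, log-convexity of $\Gamma$ (Bohr--Mollerup) applied with weights $\tfrac12,\tfrac12$ to the points $k$ and $k+1$ gives $\Gamma(k+\tfrac12) \le \Gamma(k)^{1/2}\Gamma(k+1)^{1/2}$, and since $\Gamma(k) = \Gamma(k+1)/k$ this is $\Gamma(k+\tfrac12)/\Gamma(k+1) \le k^{-1/2}$ (equivalently one may quote Gautschi's inequality, or the sharper bound $\le (k+\tfrac14)^{-1/2}$). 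Combining the two estimates yields
\[
\delta_{\ell,k}(\alpha) \;\le\; \frac{\tfrac{d}{2}+3k+2}{2\sqrt{k\alpha}} .
\]

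It then remains to check the elementary inequality $\tfrac{d}{2}+3k+2 \le 6k$, i.e.\ $k \ge \tfrac16(d+4)$, under the hypothesis $k \ge \min(\tfrac{d}{2},1)$; this is a short case analysis according to whether $d \le 2$ (so $\min(\tfrac{d}{2},1)=\tfrac{d}{2}$) or $d \ge 2$ (so $\min(\tfrac{d}{2},1)=1$), in each case using the exact value of $\lfloor\tfrac{d}{2}+k\rfloor$. The delicate point is precisely this last step: reaching exactly the threshold $k \ge \min(\tfrac d2,1)$ requires retaining the integer structure of $\ell$ (distinguishing $d$ even/odd and $k$ integer/half-integer) and, where needed, the sharper Gamma estimate $\Gamma(k+\tfrac12)/\Gamma(k+1) \le (k+\tfrac14)^{-1/2}$, so that the constant $3$ survives. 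Everything else is a routine computation.
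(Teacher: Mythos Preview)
Your approach coincides with the paper's: the paper bounds $\Gamma(k+\tfrac12)/\Gamma(k+1)\le k^{-1/2}$ via Wendel's inequality (the right-hand half of $\bigl(\tfrac{x}{x+s}\bigr)^{1-s}\le \Gamma(x+s)/(x^s\Gamma(x))\le 1$ with $s=\tfrac12$), which is exactly your log-convexity/Gautschi bound, and then simply writes
\[
\delta_{\ell,k}(\alpha)\;\le\; \frac{\ell+2k+1}{2\sqrt{k\alpha}}\;\le\; \frac{3\sqrt{k}}{\sqrt{\alpha}}
\]
without further comment. So the two arguments are the same in substance; you are merely more explicit about the residual inequality $\ell+2k+1\le 6k$.

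That residual step, however, cannot be completed under the stated hypothesis $k\ge\min(\tfrac d2,1)$, and neither the sharper Gamma bound $(k+\tfrac14)^{-1/2}$ nor careful tracking of the floor rescues it. Take $d=4$, $k=1$ (permitted since $\min(\tfrac d2,1)=1$): then $\ell=\lfloor 2+1\rfloor+1=4$, so $\ell+2k+1=7>6=6k$, and in fact
\[
\delta_{4,1}(\alpha)=\frac{7\,\Gamma(3/2)}{2\sqrt\alpha\,\Gamma(2)}=\frac{7\sqrt\pi}{4\sqrt\alpha}\approx \frac{3.10}{\sqrt\alpha}>\frac{3}{\sqrt\alpha}=\frac{3\sqrt k}{\sqrt\alpha}.
\]
Thus the lemma as printed fails at this point, so your promised ``short case analysis'' cannot succeed; the same gap sits behind the paper's unjustified second inequality. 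The intended hypothesis is almost certainly $k\ge\max(\tfrac d2,1)$: under that assumption $k\ge d/2$ and $k\ge 1$ give $\ell+2k+1\le \tfrac d2+3k+2\le 4k+2\le 6k$, and both your argument and the paper's one-liner go through cleanly.
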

\begin{proof}
From \cite{Wen48} we have the double inequality
\begin{equation*}
\left(\frac{x}{x+s} \right)^{1-s} \leq \frac{\Gamma(x+s)}{x^s \, \Gamma(x)} \leq 1,
\end{equation*}
for $0 < s < 1$ and $x>0$. With $s=\frac{1}{2}$ and using $\Gamma(k+1) = k \Gamma(k)$, this gives
\begin{equation*}
\delta_{\ell,k}(\alpha) = \frac{(\ell+2k+1)}{2 \sqrt{\alpha}}\frac{\Gamma\left(k+\frac{1}{2}\right)}{\Gamma(k+1)}\\
\leq \frac{(\ell+2k+1)}{2\sqrt{k \, \alpha}} \leq \frac{3\sqrt{k}}{\sqrt{\alpha}}.\hspace*{\fill}
\end{equation*}
\hspace*{\fill} \qed
\end{proof}

\begin{lem}
\label{lemmaRatioGammas}
Let $\eta>0$. The function $f_{\eta}: (0,\infty) \rightarrow \mathbb{R}$ defined by
\begin{equation}
f_{\eta}(y) := \frac{\Gamma(y + \eta)}{\Gamma(y)}, \quad y > 0 \label{eqnUppBoundGammas},
\end{equation}
is increasing on $(0,\infty)$.
\end{lem}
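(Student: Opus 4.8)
The plan is to pass to logarithms and differentiate. Since $\Gamma$ is positive on $(0,\infty)$, so is $f_{\eta}$, and it suffices to show that
$g_{\eta}(y) := \log f_{\eta}(y) = \log\Gamma(y+\eta) - \log\Gamma(y)$
is (strictly) increasing. Differentiating gives $g_{\eta}'(y) = \psi(y+\eta) - \psi(y)$, where $\psi := \Gamma'/\Gamma$ is the digamma function, so the claim reduces to the assertion that $\psi$ is strictly increasing on $(0,\infty)$.

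To see that $\psi$ is strictly increasing, I would invoke the classical series representation $\psi'(y) = \sum_{n=0}^{\infty} (y+n)^{-2}$, which is a sum of positive terms and hence strictly positive for every $y>0$; equivalently, one can use strict log-convexity of $\Gamma$ (the Bohr--Mollerup property), which says precisely that $\log\Gamma$ is strictly convex, so that $\psi = (\log\Gamma)'$ is strictly increasing. Either way, because $\eta>0$ we get $g_{\eta}'(y) = \psi(y+\eta) - \psi(y) > 0$ for all $y>0$, whence $g_{\eta}$, and therefore $f_{\eta}$, is strictly increasing, which in particular gives the stated monotonicity.

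There is essentially no obstacle here: the only point requiring care is the standard fact that $\Gamma$ is log-convex, or equivalently that the trigamma series converges with termwise positive summands; granting that, the argument is a one-line computation. If one prefers to avoid the digamma function altogether, the conclusion follows from strict convexity of $\log\Gamma$ directly: for a strictly convex function the difference quotient over an interval of fixed length $\eta$ strictly increases as the interval is translated to the right, which is exactly the monotonicity of $g_{\eta}$. As a cheap consistency check, the functional equation yields $f_{\eta}(y+1)/f_{\eta}(y) = (y+\eta)/y > 1$, so $f_{\eta}$ is at least increasing along each unit lattice, consistent with the full statement.
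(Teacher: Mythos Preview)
Your argument is correct and follows essentially the same route as the paper: pass to $\log f_{\eta}$, differentiate, and reduce to the monotonicity of the digamma function on $(0,\infty)$. The only difference is that you supply a justification for the digamma's monotonicity (via the trigamma series or log-convexity of $\Gamma$), whereas the paper simply asserts it.
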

\begin{proof}
Defining $F_{\eta}(y) := \log f_{\eta}(y)$, it is clear that $f_\eta$ is increasing on $(0,\infty)$ if and only if $F_\eta$ is increasing. But
\begin{equation*}
\frac{\mathrm{d}\,F_{\eta}(y)}{\mathrm{d}y}
= \frac{\mathrm{d}\,\log\,\Gamma(y+\eta)}{\mathrm{d}y}-\frac{\mathrm{d}\,\log\,\Gamma(y)}{\mathrm{d}y}
=\psi_0(y+\eta) - \psi_0(y),
\end{equation*}
where $\psi_0:=\mathrm{d}\,\log\,\Gamma(y)/\mathrm{d}y$ is the digamma function.  Since the digamma function is increasing on $(0,\infty)$, the result follows.    \hspace*{\fill} \qed
\end{proof}

\section{Limit of the Wendland functions as $k \rightarrow \infty$} \label{SectionLimits} \hfill
\newline
In this section we derive the limit of the normalised equal area Wendland functions as $k \rightarrow \infty$. We start with a convergence result for the Fourier transforms.

\begin{thm}
\label{thmConvFourier}
Let $\alpha$ be a positive real constant, and let $\psi_{\ell,k}$ be the normalised equal area Wendland functions defined by \eqref{psiFormula} and \eqref{DeltaA} with $\ell$ given by \eqref{ldef}. Then
\begin{equation}
\lim_{k \rightarrow \infty} \mathcal{F}_d\psi_{\ell,k}(z) = \widehat{G}_{\alpha}(z)  \label{eqnConvFourier}
\end{equation}
uniformly for $z$ in an arbitrary bounded subinterval of $\mathbb{R}^+$.
\end{thm}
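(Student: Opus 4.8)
The plan is to turn $\mathcal F_d\psi_{\ell,k}$ into an explicit ${}_1F_2$ series by combining the scaling law for the radial Fourier transform with Lemma \ref{thmFTwf}, and then to pass to the limit $k\to\infty$ inside the series term by term. By \eqref{psiFormula}, $\psi_{\ell,k}(y)=c_{\ell,k}\,\phi_{\ell,k}\!\bigl(y/\delta_{\ell,k}(\alpha)\bigr)$ with $c_{\ell,k}:=2^{k-1}\Gamma(k)\Gamma(\ell+2k+1)/\bigl(\Gamma(\ell+1)\Gamma(2k)\bigr)$; this is continuous and compactly supported, so $\Psi_{\ell,k}\in L_1(\mathbb R^d)$. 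A change of variable in \eqref{eqnRadialFourierTrans} gives $\mathcal F_d[\phi(\cdot/\delta)](z)=\delta^d\,\mathcal F_d\phi(\delta z)$, whence Lemma \ref{thmFTwf} yields
\[
\mathcal F_d\psi_{\ell,k}(z)=A_{\ell,k}\;{}_1F_2\!\left(\tfrac{d+1}{2}+k;\ \tfrac{\ell+d+1}{2}+k,\ \tfrac{\ell+d+2}{2}+k;\ -\tfrac{\delta_{\ell,k}(\alpha)^2z^2}{4}\right),\qquad A_{\ell,k}:=c_{\ell,k}\,\delta_{\ell,k}(\alpha)^d\,C_d^{\ell,k}.
\]

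First I would dispose of the prefactor. Using the duplication formula \eqref{DoubleGamma} in the form $\Gamma(2k)=2^{2k-1}\pi^{-1/2}\Gamma(k)\Gamma(k+\tfrac12)$ together with the definition \eqref{DeltaA} of $\delta_{\ell,k}(\alpha)$, the $2^{2k}$ factors cancel and $A_{\ell,k}$ collapses to
\[
A_{\ell,k}=\frac{1}{(2\alpha)^{d/2}}\cdot\frac{(\ell+2k+1)^d\,\Gamma(\ell+2k+1)}{\Gamma(\ell+2k+1+d)}\cdot\frac{\Gamma\!\left(k+\tfrac{d+1}{2}\right)\Gamma\!\left(k+\tfrac12\right)^{d-1}}{\Gamma(k+1)^d}.
\]
Since $\ell\to\infty$ with $k$, the middle factor is $\prod_{j=0}^{d-1}(\ell+2k+1)/(\ell+2k+1+j)\to1$; and by Stirling's formula (equivalently $\Gamma(k+a)/\Gamma(k+b)\sim k^{a-b}$, the monotonicity of such ratios being Lemma \ref{lemmaRatioGammas}) the last factor behaves like $k^{(\frac{d+1}{2}-1)+(d-1)(\frac12-1)}=k^0$, so it also $\to1$. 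Hence $A_{\ell,k}\to(2\alpha)^{-d/2}$, which is exactly the prefactor of $\widehat G_\alpha$ in \eqref{FTG}.

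Next I would handle the ${}_1F_2$. Write its $n$th term as $t_n(k,z)=\frac{1}{n!}\,\frac{(a)_n}{(b_1)_n(b_2)_n}\bigl(-\tfrac{\delta_{\ell,k}(\alpha)^2z^2}{4}\bigr)^n$ with $a=k+\tfrac{d+1}{2}$, $b_1=a+\tfrac\ell2$, $b_2=a+\tfrac{\ell+1}{2}$. Since $0<a\le b_1$ and $b_2\ge k$ (as $\ell,d\ge1$),
\[
\frac{(a)_n}{(b_1)_n(b_2)_n}=\prod_{j=0}^{n-1}\frac{a+j}{(b_1+j)(b_2+j)}\le\prod_{j=0}^{n-1}\frac{1}{b_2+j}\le b_2^{-n}\le k^{-n},
\]
while Lemma \ref{lemmaUpperBoundDelta} gives $\delta_{\ell,k}(\alpha)^2\le 9k/\alpha$ for all large $k$. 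Hence, for $z$ in a bounded interval $[0,Z]$ and all sufficiently large $k$,
\[
|t_n(k,z)|\le\frac{1}{n!}\Bigl(\frac{9Z^2}{4\alpha}\Bigr)^n=:M_n,\qquad \sum_{n\ge0}M_n=e^{9Z^2/(4\alpha)}<\infty .
\]
On the other hand, for each fixed $n$, using $\ell\sim k$ and $\Gamma(k+\tfrac12)/\Gamma(k+1)\sim k^{-1/2}$ one finds $\frac{(a)_n}{(b_1)_n(b_2)_n}\sim(4/(9k))^n$ and $\delta_{\ell,k}(\alpha)^2\sim 9k/(4\alpha)$, so $t_n(k,z)\to\frac{1}{n!}\bigl(-\tfrac{z^2}{4\alpha}\bigr)^n$ as $k\to\infty$, uniformly on $[0,Z]$ since $t_n(\cdot,z)$ is the monomial $c_n(k)z^{2n}$ with $c_n(k)$ convergent.

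Finally, a Tannery-type argument (pick $N$ so the $M_n$-tails of both series fall below $\varepsilon/3$, then invoke uniform convergence of the finitely many remaining terms) gives ${}_1F_2(\dots;-\delta_{\ell,k}(\alpha)^2z^2/4)\to\sum_{n\ge0}\frac{1}{n!}(-z^2/(4\alpha))^n=e^{-z^2/(4\alpha)}$ uniformly for $z\in[0,Z]$, and multiplying by $A_{\ell,k}\to(2\alpha)^{-d/2}$ yields $\mathcal F_d\psi_{\ell,k}(z)\to(2\alpha)^{-d/2}e^{-z^2/(4\alpha)}=\widehat G_\alpha(z)$ uniformly on $[0,Z]$, as claimed. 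I expect the main obstacle to be exactly the tension inside the ${}_1F_2$: its argument $\delta_{\ell,k}(\alpha)^2z^2/4$ grows linearly in $k$, but this is precisely offset by the $O(k^{-1})$ decay of each coefficient factor, so the real work is producing the $k$-independent summable majorant $M_n$ and carrying out the gamma-function bookkeeping that makes $A_{\ell,k}\to(2\alpha)^{-d/2}$.
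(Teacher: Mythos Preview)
Your proposal is correct and follows essentially the same route as the paper: express $\mathcal F_d\psi_{\ell,k}$ via the scaling law and the ${}_1F_2$ formula of Lemma~\ref{thmFTwf}, manufacture a $k$-independent summable majorant for the series using Lemma~\ref{lemmaUpperBoundDelta} together with gamma-ratio bounds, and then pass to the limit term by term. The only organisational difference is that the paper absorbs your prefactor $A_{\ell,k}$ into the general term $w_n(k)$ and invokes dominated convergence in one stroke, whereas you separate $A_{\ell,k}$ from the ${}_1F_2$ and run an explicit Tannery argument; your handling of the uniformity in $z$ is in fact tidier than the paper's closing sentence.
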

\begin{proof}
Firstly, we express the Fourier transform of $\psi_{\ell,k}$ in terms of the Fourier transform of $\phi_{\ell,k}$. Writing $\delta_{\ell,k}$ for $\delta_{\ell,k}(\alpha)$ and using the transformation $y = r \, \delta_{\ell,k}$ together with \eqref{eqnRadialFourierTrans} and Theorem \ref{PsiTheorem}, gives
\begin{eqnarray}
\mathcal{F}_d \psi_{\ell,k}(z) &=&  \frac{\delta_{\ell,k}^d 2^{k-1}\Gamma(k)\Gamma(\ell+2k+1)}{\Gamma(\ell+1)\Gamma(2k)} \, \left( \mathcal{F}_d \phi_{\ell,k} \right) (\delta_{\ell,k} z) \nonumber\\
&=& \frac{\delta_{\ell,k}^d \, 2^{-\frac{d}{2}}\Gamma(\ell+2k+1)\Gamma(k)\Gamma(d+2k)}{\Gamma(2k)\Gamma\left(\frac{d}{2}+k\right)\Gamma(\ell+d+2k+1)} \times \nonumber\\&& {}_1F_2 \left(\frac{d+1}{2} + k ; \frac{\ell+d+2k+1}{2}, \frac{\ell+d+2k+2}{2}; -\frac{\delta_{\ell,k}^2 z^2}{4} \right) \nonumber\\
&=& 2^{-\frac{d}{2}} \sum_{n=0}^{\infty} \frac{\Gamma(d+2k+2n)\Gamma(\ell+2k+1)\Gamma(k)}{\Gamma(2k)\Gamma(\ell+2k+1+d+2n)\Gamma\left(k+\frac{d}{2}+n\right)} \, \delta_{\ell,k}^{d+2n} \frac{\left(-\frac{z^2}{4} \right)^n}{n!} \nonumber \\
&=& 2^{-\frac{d}{2}} \sum_{n=0}^{\infty} w_n(k) \left(-\frac{z^2}{4} \right)^n, \label{eqnFTpsi2}
\end{eqnarray}
where
\begin{equation}
w_n(k) := \frac{\Gamma(d+2k+2n)\Gamma(\ell+2k+1)\Gamma(k)}{\Gamma(2k)\Gamma(\ell+2k+1+d+2n)\Gamma\left(k+\frac{d}{2}+n\right)} \, \frac{\delta_{\ell,k}^{d+2n}}{n!}.
\end{equation}

Using \eqref{DoubleGamma} repeatedly, together with Lemma \ref{lemmaUpperBoundDelta} and the following inequalities (both a consequence of Lemma \ref{lemmaRatioGammas})
\begin{equation*}
\frac{\Gamma(\ell+2k+1)}{\Gamma(\ell+2k+1+d+2n)} \leq \frac{\Gamma(3k)}{\Gamma(3k+d+2n)},\quad
\frac{\Gamma \left( \frac{d}{2}+k+n+\frac{1}{2} \right)}{\Gamma \left( k + \frac{1}{2} \right)} \leq \frac{\Gamma \left( \frac{d}{2} + \frac{3k}{2} + n \right)}{\Gamma \left( \frac{3k}{2} \right)},
\end{equation*}
we have for $k \geq 1$
\begin{eqnarray*}
w_n(k) &=& \frac{ 2^{d+2n} \Gamma\left(\frac{d}{2}+k+n+\frac{1}{2} \right) \Gamma(\ell+2k+1)}{\Gamma\left(k + \frac{1}{2} \right) \Gamma(\ell+2k+1+d+2n)} \,  \frac{\delta_{\ell,k}^{d+2n}}{n!} \\
&\leq& \frac{ 2^{d+2n} \Gamma\left(\frac{d}{2}+\frac{3k}{2}+n \right) \Gamma(3k)}{\Gamma\left(\frac{3k}{2} \right) \Gamma(3k+d+2n)} \,  \frac{\delta_{\ell,k}^{d+2n}}{n!} \\
&=& \frac{\Gamma\left(\frac{3k+1}{2} \right)}{\Gamma\left( \frac{3k+d+1}{2}+n\right)} \,  \frac{\delta_{\ell,k}^{d+2n}}{n!}
\leq \frac{1}{\left(\frac{3k}{2}\right)^{\frac{d}{2}+n}} \, \left(\frac{3\sqrt{k}}{\sqrt{\alpha}}\right)^{d+2n} \, \frac{1}{n!}\\
&=& \left(\frac{6}{\alpha}\right)^{\frac{d}{2}+n} \, \frac{1}{n!}
=: U_n,
\end{eqnarray*}
where in the penultimate step we used the bound \cite[5.6.8]{DLMF}
\begin{equation}
\frac{\Gamma(x+a)}{\Gamma(x+b)} \leq \frac{1}{x^{b-a}}, \hspace{0.1in} x > 0, b-a\geq 1, a \geq 0.
\end{equation}

The ratio test shows that $\sum_n U_n \,\left(-\frac{z^2}{4} \right)^n$ is absolutely convergent for all $z\in \mathbb{R}^+$. Therefore by the dominated convergence theorem we can take the limit as $k \rightarrow \infty$ inside the infinite sum in \eqref{eqnFTpsi2}, giving
\begin{equation*}
\lim_{k \rightarrow \infty} \mathcal{F}_d \psi_{\ell,k}(z) = 2^{-\frac{d}{2}} \sum_{n=0}^{\infty} \lim_{k \rightarrow \infty} w_n(k) \, \left(-\frac{z^2}{4} \right)^n  = \left(2 \alpha \right)^{-\frac{d}{2}} \sum_{n=0}^{\infty} \frac{\left(-\frac{z^2}{4\alpha} \right)^n }{n!}= \widehat{G}_{\alpha}(z),
\end{equation*}
where we used \eqref{FTG} and the asymptotic equality, see \cite{DLMF},
\begin{equation}
\frac{\Gamma(x+a)}{\Gamma(x+b)} \sim x^{a-b}. \label{limRatioGammas}
\end{equation}
This proves pointwise convergence of a sequence of continuous functions, which is necessarily uniform on a compact interval. \hspace*{\fill} \qed \newline
\end{proof}

We are now ready to state the main result. \newline
\begin{thm}
\label{thmMain}
Let $\alpha$ be a positive number and $\psi_{\ell,k}$ be the normalised equal area Wendland functions defined by \eqref{psiFormula}, \eqref{DeltaA} and \eqref{ldef}. Then
\begin{equation}
\lim_{k \rightarrow \infty} \psi_{\ell,k}(y) = G_{\alpha}(y),  \label{mainTheorem2}
\end{equation}
with uniform convergence for $y \in \mathbb{R}^+$.
\end{thm}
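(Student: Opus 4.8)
The plan is to deduce the pointwise (indeed uniform) convergence of $\psi_{\ell,k}$ to $G_\alpha$ from the convergence of the Fourier transforms established in Theorem \ref{thmConvFourier}, using the Fourier inversion formula \eqref{eqnHankelInversion}. First I would note that since $\psi_{\ell,k}$ is continuous, compactly supported, and positive definite (being a positive multiple of a rescaled Wendland function), its Fourier transform $\mathcal{F}_d\psi_{\ell,k}$ is non-negative and in $L_1(\mathbb{R}^d)$, so the inversion formula applies and gives, for $y\ge 0$,
\begin{equation*}
\psi_{\ell,k}(y) = y^{1-\frac{d}{2}}\int_0^\infty \mathcal{F}_d\psi_{\ell,k}(z)\,z^{\frac{d}{2}}\,J_{\frac{d}{2}-1}(yz)\,\mathrm{d}z,
\end{equation*}
and similarly $G_\alpha(y) = y^{1-\frac{d}{2}}\int_0^\infty \widehat{G}_\alpha(z)\,z^{\frac{d}{2}}\,J_{\frac{d}{2}-1}(yz)\,\mathrm{d}z$ since $\widehat{G}_\alpha\in L_1$. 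Subtracting, and using the bound $|J_{\frac{d}{2}-1}(yz)|\le (yz)^{\frac{d}{2}-1}/(2^{\frac{d}{2}-1}\Gamma(\frac d2))$ from \eqref{Bessel}, I get
\begin{equation*}
|\psi_{\ell,k}(y) - G_\alpha(y)| \le \frac{1}{2^{\frac d2-1}\Gamma(\frac d2)}\int_0^\infty \bigl|\mathcal{F}_d\psi_{\ell,k}(z) - \widehat{G}_\alpha(z)\bigr|\,z^{d-1}\,\mathrm{d}z,
\end{equation*}
a bound independent of $y$. So it suffices to show the right-hand integral tends to $0$ as $k\to\infty$.

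The integral splits into a fixed compact piece $[0,R]$ and a tail $[R,\infty)$. On $[0,R]$, Theorem \ref{thmConvFourier} gives uniform convergence of $\mathcal{F}_d\psi_{\ell,k}$ to $\widehat{G}_\alpha$, so that contribution is at most (const)$\cdot R^d\cdot \sup_{[0,R]}|\mathcal{F}_d\psi_{\ell,k}-\widehat{G}_\alpha|\to 0$. The tail needs a uniform-in-$k$ integrable majorant for $\mathcal{F}_d\psi_{\ell,k}(z)z^{d-1}$; the $\widehat G_\alpha$ part is harmless since it is Schwartz. For the $\mathcal{F}_d\psi_{\ell,k}$ part, the key point is that we need decay in $z$ faster than $z^{-d}$. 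The representation \eqref{eqnFTpsi2} with the majorant $w_n(k)\le U_n=(6/\alpha)^{d/2+n}/n!$ only gives $|\mathcal{F}_d\psi_{\ell,k}(z)|\le 2^{-d/2}(6/\alpha)^{d/2}e^{z^2/4}$ after replacing $(-z^2/4)^n$ by its absolute value — that is useless for large $z$. So I would instead exploit smoothness: $\psi_{\ell,k}$ is $\lfloor 2k\rfloor$-times (roughly) continuously differentiable with all derivatives up to that order in $L_1$, which forces $|\mathcal{F}_d\psi_{\ell,k}(z)|\le c_k(1+z)^{-2k}$ or similar. The cleanest route is to go back to the hypergeometric ${}_1F_2$ representation from Lemma \ref{thmFTwf} and use the known large-argument asymptotics of ${}_1F_2$ (or equivalently, integrate \eqref{formulaWFsIntDefn} by parts repeatedly in the Fourier integral) to get, for $z$ large and $k\ge k_0$, a bound of the form $|\mathcal{F}_d\psi_{\ell,k}(z)|\le C\,z^{-d-1}$ with $C$ independent of $k$ once $k\ge k_0$. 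Here Lemma \ref{lemmaUpperBoundDelta} controls the rescaling factor $\delta_{\ell,k}\le 3\sqrt k/\sqrt\alpha$ so that all the $k$-dependent constants stay bounded after the gamma-ratio estimates already used in the proof of Theorem \ref{thmConvFourier}.

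I expect the tail estimate to be the main obstacle: producing a single integrable majorant $z^{d-1}|\mathcal{F}_d\psi_{\ell,k}(z)|\le M(z)\in L_1(\mathbb{R}^+)$ valid for all sufficiently large $k$ requires genuine decay-in-$z$ information about the ${}_1F_2$, not just the term-by-term Taylor bound that sufficed for convergence on compacta. Once that majorant is in hand, dominated convergence finishes the argument: the integrand converges pointwise (by Theorem \ref{thmConvFourier} on each compact set), is dominated, so $\int_0^\infty|\mathcal{F}_d\psi_{\ell,k}(z)-\widehat G_\alpha(z)|z^{d-1}\,\mathrm{d}z\to 0$, and hence $\sup_{y\ge 0}|\psi_{\ell,k}(y)-G_\alpha(y)|\to 0$, which is \eqref{mainTheorem2} with uniform convergence on $\mathbb{R}^+$. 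An alternative that sidesteps the sharp tail bound: one could instead invoke a Pólya-type or Lévy-continuity argument — the $\psi_{\ell,k}$ are uniformly bounded by $1$, supported in a uniformly bounded interval (again by Lemma \ref{lemmaUpperBoundDelta}), equicontinuous, and have convergent Fourier transforms, so any uniformly convergent subsequence must have limit with Fourier transform $\widehat G_\alpha$, hence limit $G_\alpha$; combined with Arzelà–Ascoli this gives uniform convergence of the whole sequence. I would present the inversion-plus-dominated-convergence proof as the main line and mention the compactness argument as a remark.
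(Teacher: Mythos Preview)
Your setup through the $y$-independent bound
\[
|\psi_{\ell,k}(y)-G_\alpha(y)|\le \frac{2^{1-\frac d2}}{\Gamma(\frac d2)}\int_0^\infty\bigl|\mathcal{F}_d\psi_{\ell,k}(z)-\widehat G_\alpha(z)\bigr|z^{d-1}\,\mathrm{d}z
\]
and the split at a cut-off $Z$ is exactly what the paper does. Where you diverge is in the tail: you head into a genuinely hard estimate --- a uniform-in-$k$ large-$z$ decay bound on $\mathcal{F}_d\psi_{\ell,k}$ via ${}_1F_2$ asymptotics --- which you correctly flag as the main obstacle but do not carry out. The paper bypasses this entirely with one observation you missed: since $\psi_{\ell,k}(0)=G_\alpha(0)=1$, the inversion formula at $y=0$ (i.e.\ \eqref{eqnRadialFourierTransZero} read backwards) gives
\[
\int_0^\infty \mathcal{F}_d\psi_{\ell,k}(z)\,z^{d-1}\,\mathrm{d}z \;=\; \int_0^\infty \widehat G_\alpha(z)\,z^{d-1}\,\mathrm{d}z \;=\; 2^{\frac d2-1}\Gamma\Bigl(\tfrac d2\Bigr).
\]
Because both transforms are non-negative, this equality of total mass converts the unknown tail into a compact piece plus a Gaussian tail:
\[
\int_Z^\infty \mathcal{F}_d\psi_{\ell,k}(z)\,z^{d-1}\,\mathrm{d}z + \int_Z^\infty \widehat G_\alpha(z)\,z^{d-1}\,\mathrm{d}z
= \int_0^Z\bigl(\widehat G_\alpha(z)-\mathcal{F}_d\psi_{\ell,k}(z)\bigr)z^{d-1}\,\mathrm{d}z + 2\int_Z^\infty \widehat G_\alpha(z)\,z^{d-1}\,\mathrm{d}z.
\]
So the whole thing is bounded by $2\int_0^Z|\mathcal{F}_d\psi_{\ell,k}-\widehat G_\alpha|z^{d-1}\,\mathrm{d}z + 2\int_Z^\infty\widehat G_\alpha\,z^{d-1}\,\mathrm{d}z$, and one finishes by choosing $Z$ large and then $k$ large. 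No decay information on $\mathcal{F}_d\psi_{\ell,k}$ is needed at all.

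Your fallback Arzel\`a--Ascoli argument has a concrete error: the supports $[0,\delta_{\ell,k}(\alpha)]$ are \emph{not} contained in a fixed bounded interval. Lemma~\ref{lemmaUpperBoundDelta} only says $\delta_{\ell,k}(\alpha)\le 3\sqrt{k/\alpha}$, and in fact $\delta_{\ell,k}(\alpha)\sim\tfrac{3}{2}\sqrt{k/\alpha}\to\infty$, so compactness on a single interval does not apply directly; you would still need some uniform tail control (and a separate equicontinuity argument) to make that route work.
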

\begin{proof}
It follows from \eqref{eqnHankelInversion}, with the aid of \eqref{Bessel}, that for arbitrary $y,Z\in \mathbb{R}^+$
\begin{eqnarray}
\frac{\Gamma(\frac{d}{2})}{2^{1-\frac{d}{2}}}|\psi_{\ell,k}(y)-G_{\alpha}(y)| &=& \frac{\Gamma(\frac{d}{2})}{2^{1-\frac{d}{2}}}y^{1-\frac{d}{2}}\Bigg|\int_0^{\infty} \left(\mathcal{F}_d \psi_{\ell,k}(z) - \widehat{G}_{\alpha}(z)\right)z^{\frac{d}{2}} \, J_{\frac{d}{2}-1}(yz) \, \mathrm{d}z\Bigg| \nonumber \\
&\leq&  \int_0^{\infty} |\mathcal{F}_d \psi_{\ell,k}(z) - \widehat{G}_{\alpha}(z)|z^{d-1} \, \mathrm{d}z\nonumber\\
&\leq& \int_0^Z |\mathcal{F}_d \psi_{\ell,k}(z) - \widehat{G}_{\alpha}(z)|z^{d-1} \, \mathrm{d}z + \int_Z^{\infty} \mathcal{F}_d \psi_{\ell,k}(z) \, z^{d-1} \, \mathrm{d}z + \int_Z^{\infty} \widehat{G}_{\alpha}(z) \, z^{d-1} \, \mathrm{d}z \nonumber \\
&=& \int_0^Z |\mathcal{F}_d \psi_{\ell,k}(z) - \widehat{G}_{\alpha}(z)|z^{d-1} \, \mathrm{d}z + \int_0^Z \left( \widehat{G}_{\alpha}(z) - \mathcal{F}_d \psi_{\ell,k}(z) \right) z^{d-1} \, \mathrm{d}z \nonumber \\
 && \hspace{2.5in} + 2 \int_Z^{\infty} \widehat{G}_{\alpha}(z) \, z^{d-1} \, \mathrm{d}z \nonumber \\
&\leq& 2 \int_0^Z |\mathcal{F}_d \psi_{\ell,k}(z) - \widehat{G}_{\alpha}(z)|z^{d-1} \, \mathrm{d}z + 2\int_Z^{\infty} \widehat{G}_{\alpha}(z) \, z^{d-1} \, \mathrm{d}z, \label{eqnBoundDiffTail}
\end{eqnarray}
where we used the positivity of $\mathcal{F}_d \psi_{\ell,k}$ and $\widehat{G}_\alpha$, and
\begin{equation*}
\int_0^{\infty} \mathcal{F}_d \psi_{\ell,k}(z) \, z^{d-1} \, \mathrm{d} z = \int_0^{\infty} \widehat{G}_{\alpha}(z) \, z^{d-1} \, \mathrm{d}z = 2^{\frac{d}{2}-1} \Gamma\left(\frac{d}{2} \right),
\end{equation*}
which follow from \eqref{eqnRadialFourierTransZero} with $\mathcal{F}_d$ replaced by $\mathcal{F}_d^{-1}$.

Since the bound is independent of $y$, the result now follows from the integrability of $\widehat{G}_\alpha(z)z^{d-1}$ over $\mathbb{R}^+$, together with the uniform convergence property established in the preceding theorem.
\hspace*{\fill} \qed \newline
\end{proof}

\section{Numerical results} \label{SectionQualityApprox} \hfill
\newline
In this section we present numerical results regarding the differences between the appropriately scaled Wendland functions and the Gaussian limit established in Theorem \ref{thmMain}. We also consider an interpolation example using both the Wendland functions $\phi_{\ell,k}$ and the normalised equal area Wendland functions $\psi_{\ell,k}$.

\subsection{Difference with the limiting Gaussian} \hfill
\newline
Let the differences between the normalised equal area Wendland functions and the limiting Gaussian be
\begin{equation*}
E_{\ell,k}(y) := \psi_{\ell,k}(y)-\exp(-\alpha y^2)
\end{equation*}
and let
\begin{equation*}
\epsilon_{\ell,k} := \sup_{y \geq 0} \left|E_{\ell,k}(y)\right|.
\end{equation*}
Note that the change of variable used to define $\psi_{\ell,k}$ depends on the parameter $\alpha$.
Figure \ref{wfMinusGaussianGraphs} shows plots of $E_{\ell,k}(y)$ with $\alpha=1$. The upper plots are for $d=2$, and show $k=1.5, \ell=3$ and $k=5.5, \ell=7$ respectively.  The lower plots are for $d=3$, and show $k=2,\ell=4$ and $k=6, \ell= 8$ respectively. 
\newline
\begin{figure}[!ht]
    \centering
    \subfloat[$d=2,k=1.5$]{\includegraphics[width=0.45\linewidth]{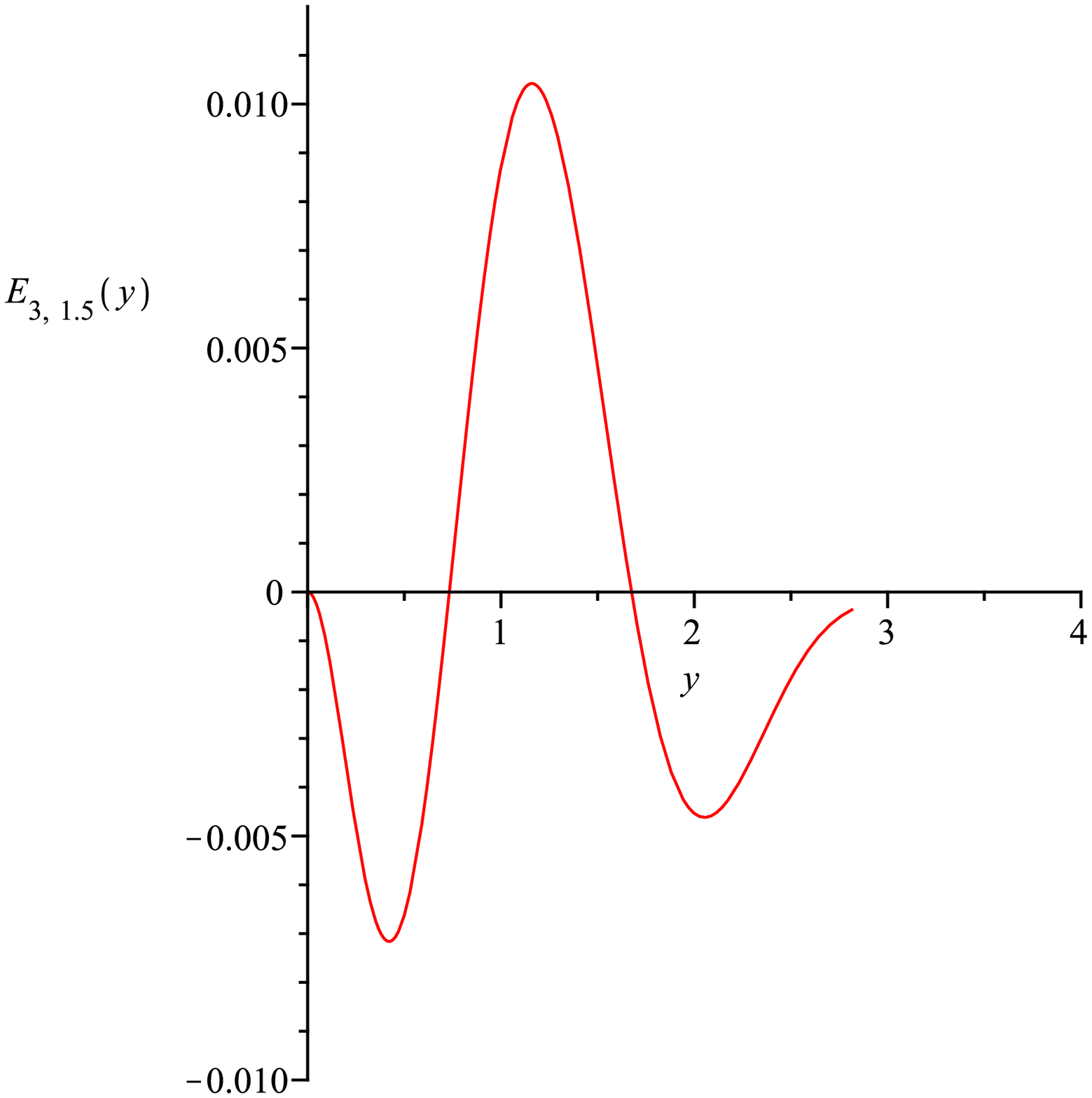}} \quad
    \subfloat[$d=2,k=5.5$]{\includegraphics[width=0.45\linewidth]{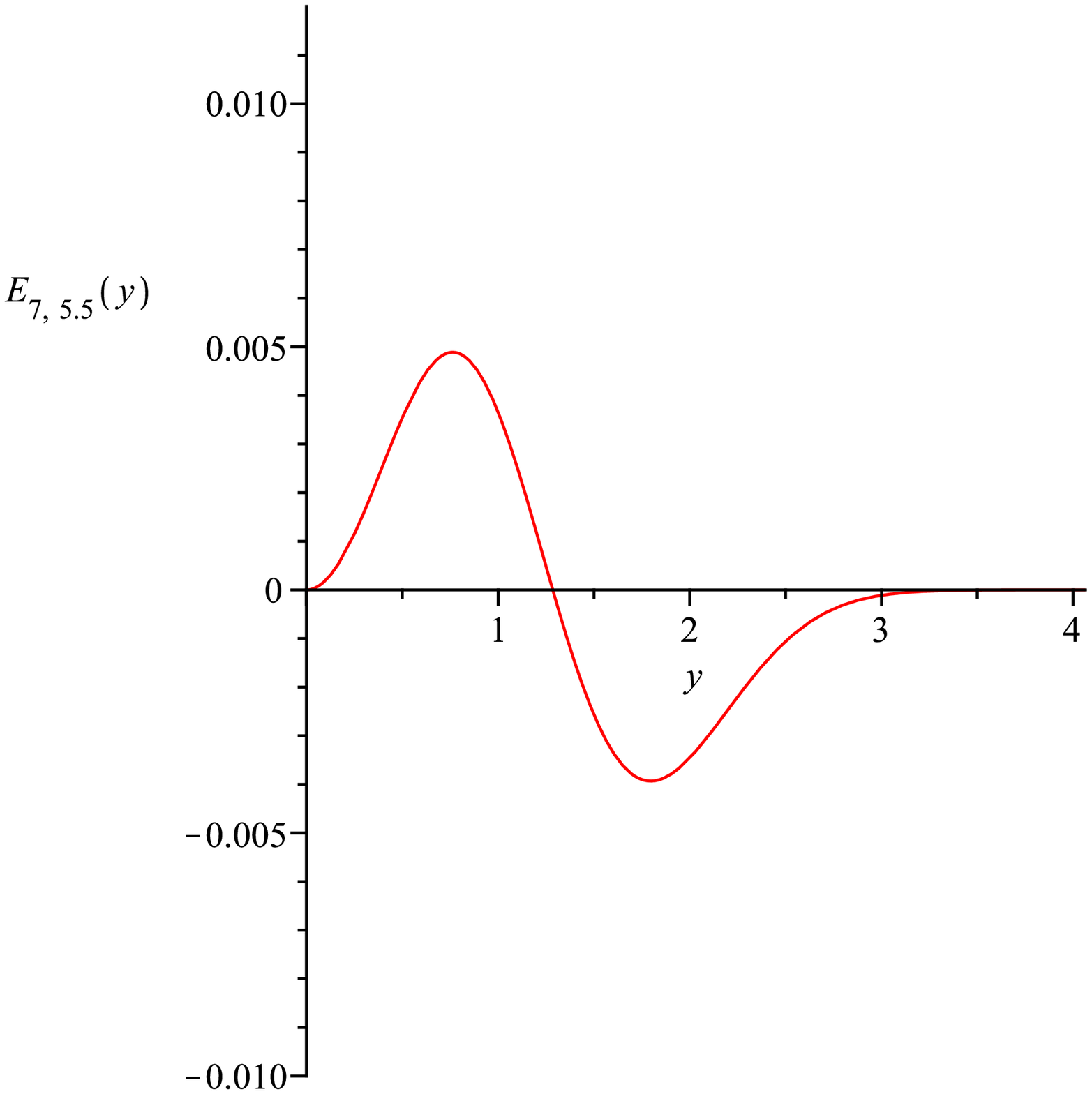}} \\
    \subfloat[$d=3,k=2$]{\includegraphics[width=0.45\linewidth]{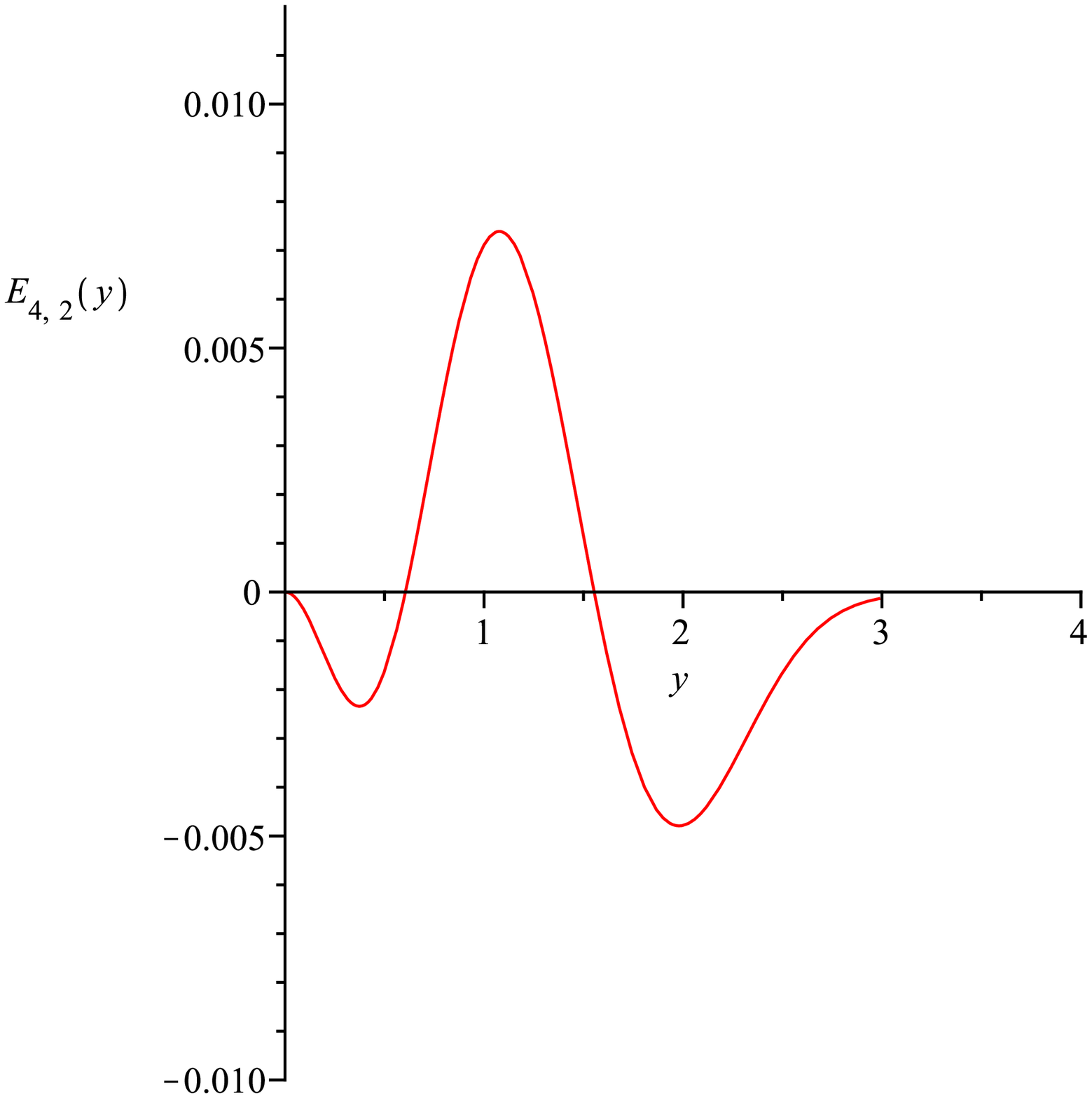}} \quad
    \subfloat[$d=3,k=6$]{\includegraphics[width=0.45\linewidth]{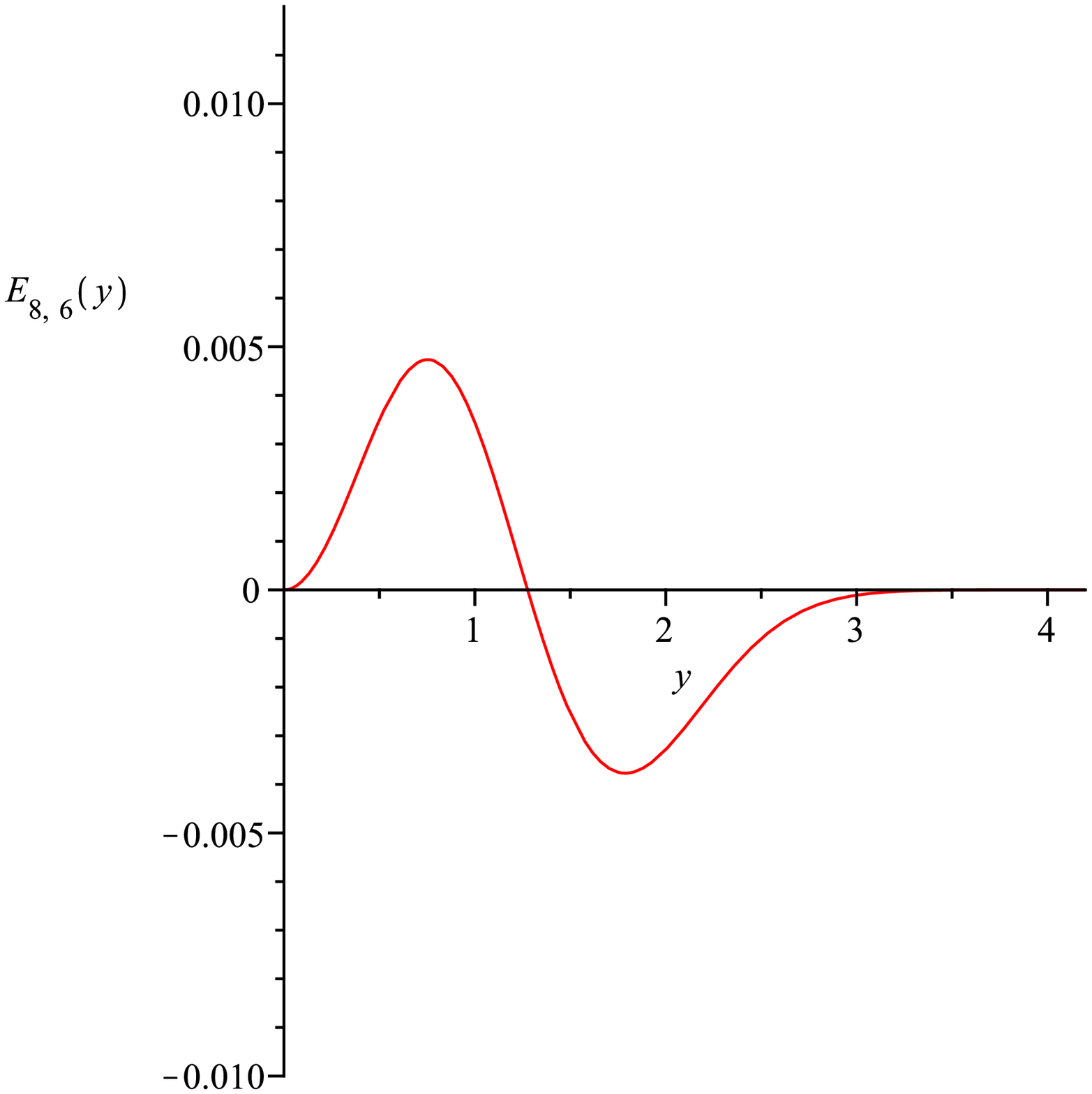}}
    \caption{$E_{\ell,k}(y)$ with $\alpha=1$ and $0 \leq y \leq \deltaLK(1)$. Subplots (a) and (b) are for the missing Wendland functions and subplots (c) and (d) are for the original Wendland functions.}
    \label{wfMinusGaussianGraphs}
\end{figure}

In the absence of theoretical rates of convergence, we show numerical results. Figure \ref{origWFsErrorGraphs} shows $\epsilon_{\ell,k}$ with $\alpha=1$ for $k=1,\ldots,50$ and $d=$ 3 and 5 for the original Wendland functions. Figure \ref{missWFsErrorGraphs} shows $\epsilon_{\ell,k}$ with $\alpha=1$ for $k = 0.5,\ldots,49.5$ and $d=$ 2 and 4 for the missing Wendland functions. Since $\alpha$ is just a scaling factor, the results do not vary in an essential way for different values of $\alpha$.
\newline
\begin{figure}[!ht]
    \centering
   \subfloat[$d=3$]{\includegraphics[width=0.45\linewidth]{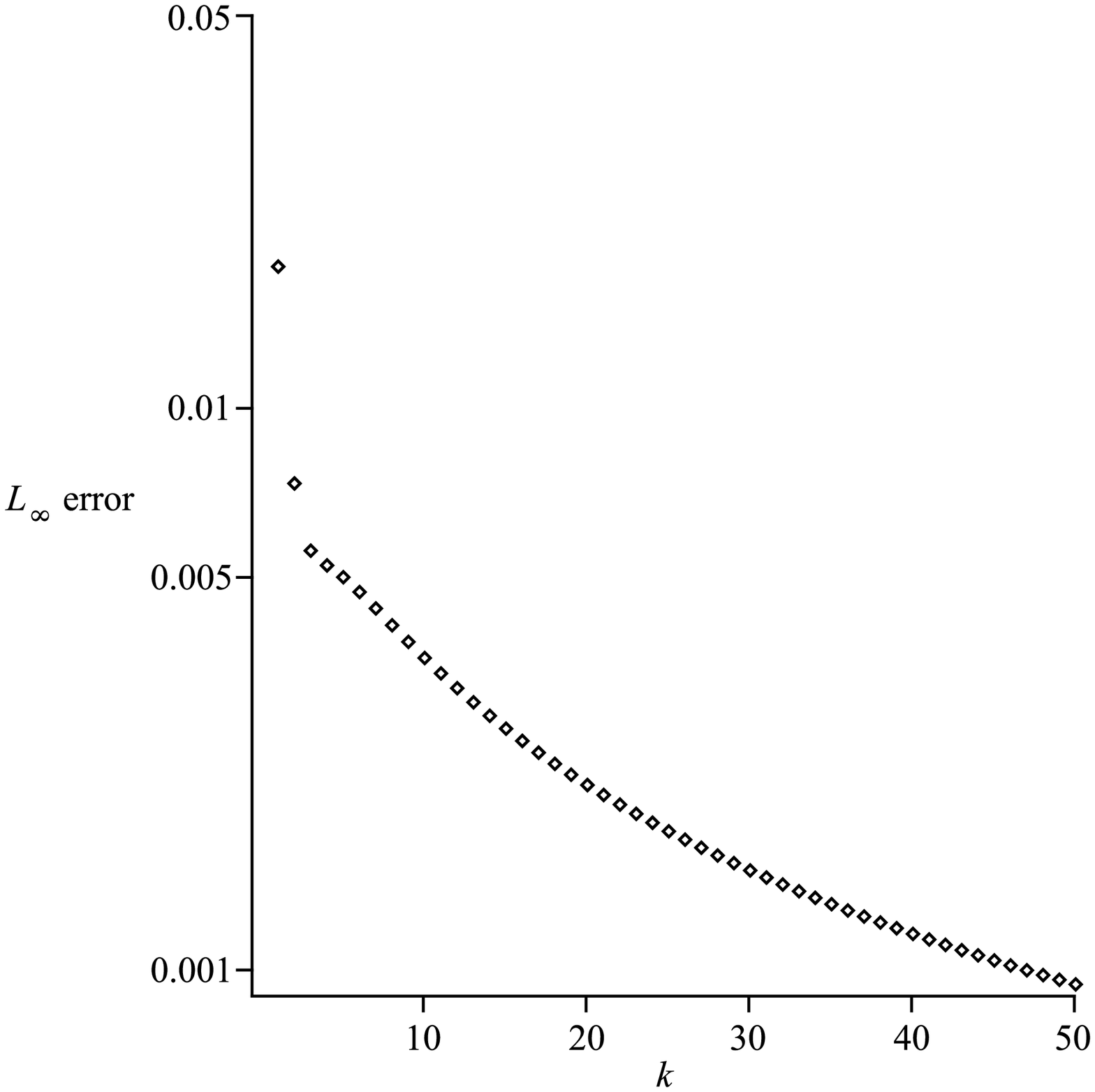}} \quad
   \subfloat[$d=5$]{\includegraphics[width=0.45\linewidth]{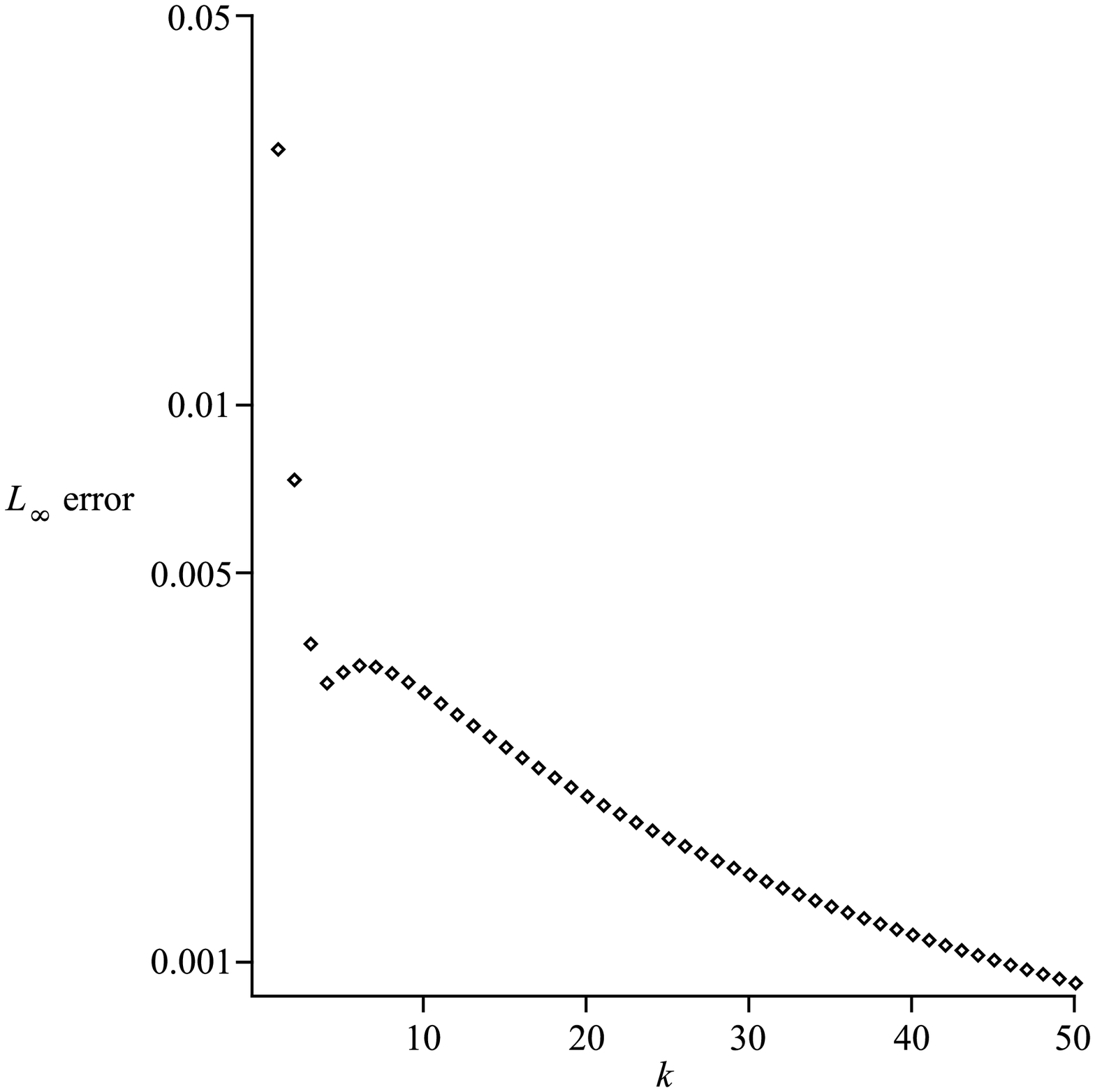}} \\
    \caption{$\epsilon_{\ell,k}$ on a logarithmic scale with $\alpha=1$, $k=1,\ldots,50$ and $d=3$ and 5 for the original Wendland functions.}
    \label{origWFsErrorGraphs}
\end{figure}

\begin{figure}[!ht]
    \centering
  \subfloat[$d=2$]{\includegraphics[width=0.45\linewidth]{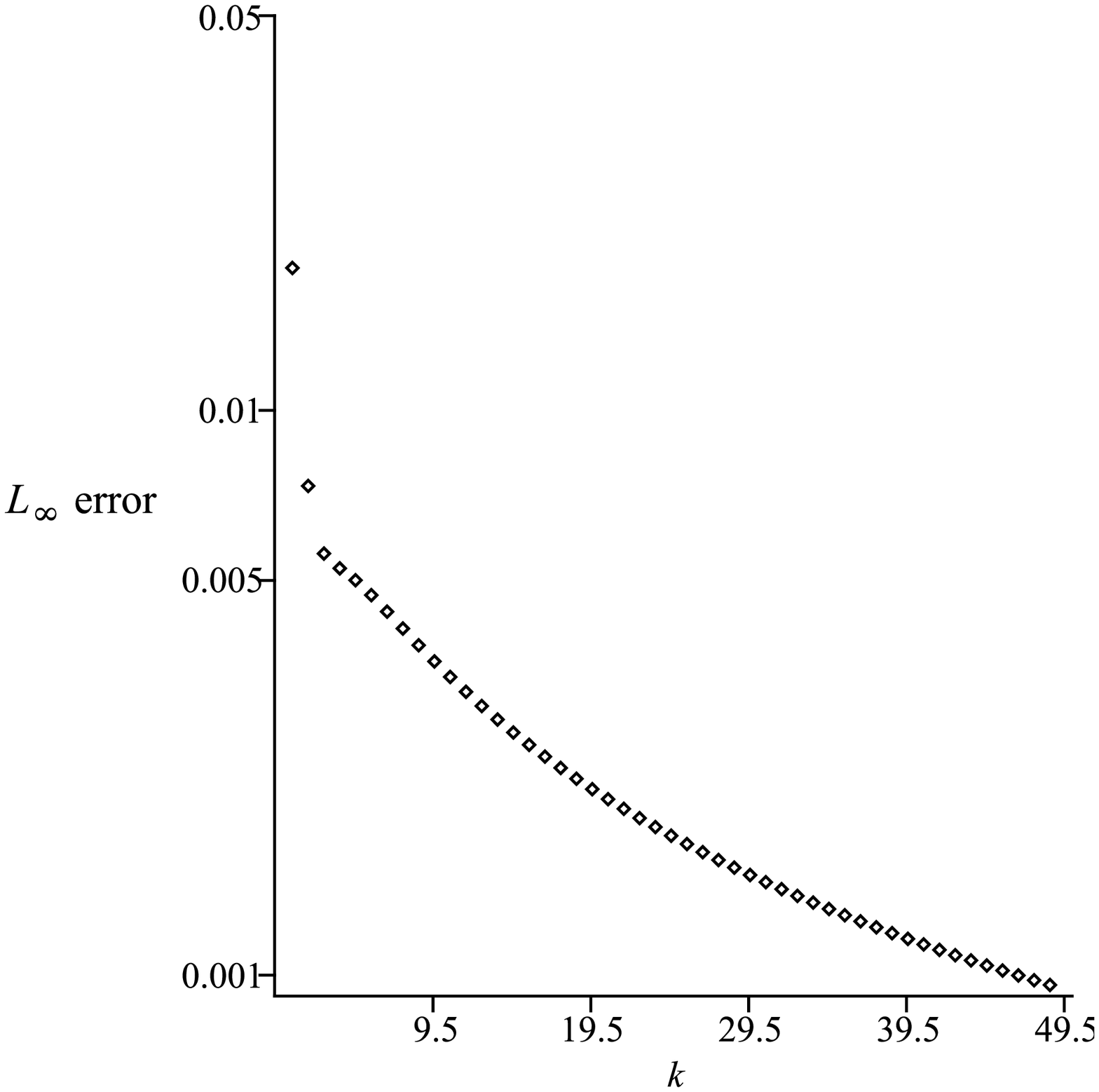}} \quad
  \subfloat[$d=4$]{\includegraphics[width=0.45\linewidth]{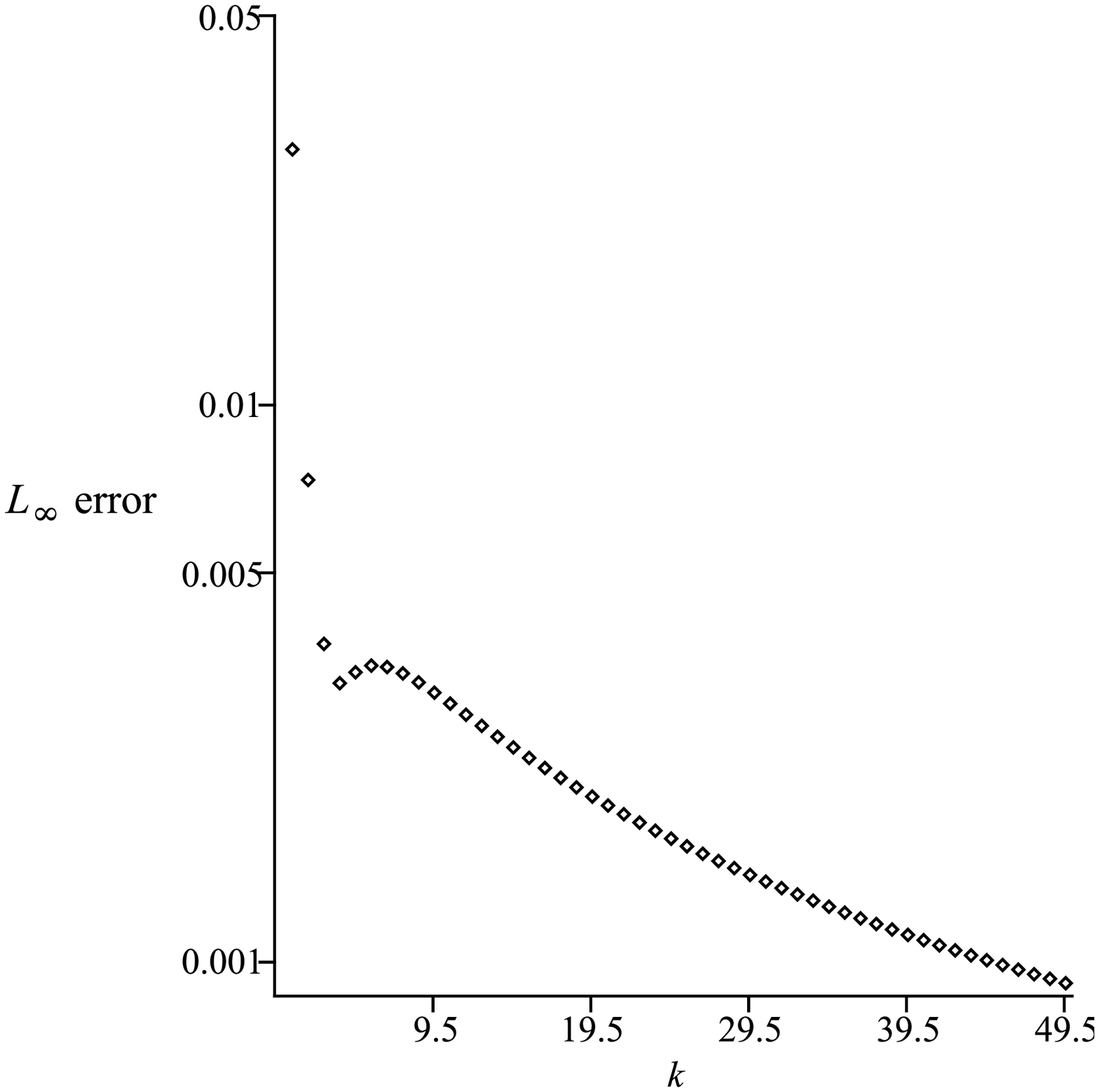}} \\
   \caption{$\epsilon_{\ell,k}$ on a logarithmic scale with $\alpha=1$, $k=0.5,\ldots,49.5$ and $d=2$ and 4 for the missing Wendland functions.}
    \label{missWFsErrorGraphs}
\end{figure}

In all cases, we see rapid convergence of $\epsilon_{\ell,k}$ to zero as the smoothness parameter $k$ increases. This is consistent with the theoretical convergence results. Note that $\epsilon_{\ell,k}$ is not monotonically decreasing in $k$. We also remark that $\epsilon_{\ell,k}$ is reached at different values of $y$ as $k$ increases.

\subsection{An interpolation example} \label{subsectionInterpolExample} \hfill
\newline
We consider an example, in which we show results obtained with both the Wendland functions $\phi_{\ell,k}$, normalised to have value 1 at the origin, and the normalised equal area Wendland functions $\psi_{\ell,k}$ for different values of $k$. The aim of the example is to approximate the 2-dimensional Franke function \cite[p.20]{Fra79} on $[0,5]^2$. For $k=1,\ldots,5$ we consider interpolation as in \eqref{eqnInterpolation1} and \eqref{eqnInterpolation2}, using the Wendland functions $\phi_{\ell,k}$ and the normalised equal area Wendland functions $\psi_{\ell,k}$ with $\alpha=2$. We use a $9 \times 9$ equally spaced grid as the centres. The number of centres is thus $n=81$. The $L_2$ error was estimated using Gaussian quadrature with a $120 \times 120$ tensor product grid of Gauss-Legendre points and the $L_{\infty}$ error was estimated by using a $360 \times 360$ equally spaced grid. Table \ref{frankeFnExample1} shows the $L_2$ and $L_{\infty}$ errors, as well as the 2-norm condition numbers of the interpolation matrices. We also show the results with the limiting Gaussian of $\exp(-2 \, y^2)$, denoted by $k=\infty$.
\newline

We see from the right-hand part of Table \ref{frankeFnExample1} that once the argument is properly scaled to give approximately constant effective support, increasing the smoothness has remarkably little effect on the error. On the other hand the condition number increases rapidly as the smoothness increases and is very large for the Gaussian limit. Taken together, these observations suggest that any benefit gained from the higher smoothness is likely to be offset by the increased condition numbers of the matrices.
\newline

The results with the Wendland functions $\phi_{\ell,k}$ are in the left-hand part of Table \ref{frankeFnExample1}. We can see that the condition number is decreasing as $k$ increases, which is due to the decreasing magnitude of the non-zero elements away from the diagonal. This is due to the fact that as $k$ increases the Wendland functions $\phi_{\ell,k}(r)$, normalised to have value 1 at $r=0$, decay more rapidly with respect to $r$, as illustrated in Figure \ref{FigOrigWFsd3k1to5}.
\begin{table}[!htbp]
\begin{centering}
\small\addtolength{\tabcolsep}{-2pt}
\begin{tabular}{|c|c| cc|c| cc| ccc|}
\hline
\multicolumn{2}{|c}{}& \multicolumn{3}{|c}{RBF: $\phi_{\ell,k}$} & \multicolumn{5}{|c|}{ RBF: $\psi_{\ell,k}$} \\
\hline
$N$  & $k$ & $L_2 $ error & $L_{\infty} $ error & $\kappa$ & $L_2 $ error & $L_{\infty} $ error & $\kappa$ & $\lambda_{\mathrm{min}}$ & $\lambda_{\mathrm{max}}$ \\
\hline
81 & 1 & 2.25e{-1} & 6.96e{-1} & 1.71 & 1.89e{-1} & 5.89e{-1} & 1.76e{1} & 1.55e{-1} & 2.74 \\
 & 2 & 2.61e{-1} & 7.95e{-1} & 1.22 & 1.86e{-1} & 5.78e{-1} & 3.14e{1} & 9.62e{-2} & 3.02\\
 & 3 & 3.00e{-1} & 8.90e{-1} & 1.07 & 1.87e{-1} & 5.79e{-1} & 4.96e{1} & 6.50e{-2} & 3.22\\
 & 4 & 3.36e{-1} & 9.73e{-1} & 1.02  & 1.87e{-1} & 5.80e{-1} & 5.56e{1} & 5.98e{-2} & 3.30 \\
& 5 & 3.63e{-1} & 1.03 & 1.01 & 1.87e{-1}  & 5.81e{-1} & 6.37e{1} &5.29e{-2} & 3.37\\
\hline
& $\infty$ & & && 1.89e{-1}&5.89e{-1} & 9.40e{1} & 4.03e{-2} & 3.78 \\
\hline
\end{tabular} \caption{Results from the example in Section \ref{subsectionInterpolExample} showing $L_2$ and $L_{\infty}$ errors, 2-norm condition numbers $\kappa$ and minimum and maximum eigenvalues ($\lambda_{\mathrm{min}}$ and $\lambda_{\mathrm{max}}$) when using the Wendland RBFs $\phi_{\ell,k}$ and the normalised equal area Wendland RBFs $\psi_{\ell,k}$ with $\alpha=2$.} \label{frankeFnExample1}
\end{centering}
\end{table}

\section{Conclusion} \label{SectionEffectiveSupport} \hfill
\newline

Compactly supported radial basis functions have proved very popular in scattered data approximation due to the resulting sparsity of the interpolation matrix. This paper has shown that for both the original and missing Wendland functions, the limit as the smoothness parameter goes to infinity, after suitable scaling and linear transformation, is a Gaussian RBF.
\newline

In Figure \ref{FigOrigWFsd3k1to5} we saw that the (original) normalised Wendland functions exhibit faster decay with respect to $r$ as the smoothness parameter $k$ increases. This suggests the need for a change of variable, not only to have a well-defined limit as considered in this paper, but perhaps also in practical applications. Without a change of variable, in the case of interpolation we could have a nearly diagonal interpolation matrix and consequently high errors between the interpolation points.
\newline

The results in the paper have illustrated the trade-off between approximation power and the condition number of the resulting linear system with Wendland functions of different smoothness. The issue of appropriate scaling and the selection of a smoothness parameter when using the Wendland functions remains a complex issue in practice.
\newline \newline
\textbf{Acknowledgements} Special thanks go to two referees for careful reading and useful suggestions.

\bibliographystyle{siam}
\bibliography{C:/Academic/PhD/LaTex/Bibliography/AndrewChernihPhDthesis}{}

\end{document}